\newcommand{\EE}{\ensuremath{\mathbb{E}}}
\newcommand{\FF}{\ensuremath{\mathbb{F}}}
\newcommand{\NN}{\ensuremath{\mathbb{N}}}
\newcommand{\PP}{\ensuremath{\mathbb{P}}}
\newcommand{\RR}{\ensuremath{\mathbb{R}}}
\renewcommand{\phi}{\varphi}
\newcommand{\eps}{\ensuremath{\epsilon}}
\newcommand{\bB}{\ensuremath{\mathcal{B}}}
\newcommand{\fF}{\ensuremath{\mathcal{F}}}
\newtheorem{theorem}{Theorem}
\newtheorem{corollary}[theorem]{Corollary}
\newtheorem{definition}[theorem]{Definition}
\newtheorem{lemma}[theorem]{Lemma}
\newtheorem{remark}[theorem]{Remark}
\title[L\'evy areas of Ornstein--Uhlenbeck processes in Hilbert--spaces]
{L\'evy--areas of Ornstein--Uhlenbeck processes in Hilbert--spaces}
\author{Mar\'{\i}a J. Garrido-Atienza}
\address[Mar\'{\i}a J. Garrido-Atienza]{Dpto. Ecuaciones Diferenciales y An\'alisis Num\'erico\\
Universidad de Sevilla, Apdo. de Correos 1160, 41080-Sevilla,
Spain} \email[Mar\'{\i}a J. Garrido-Atienza]{mgarrido@us.es}
\author{Kening Lu}
\address[Kening Lu]{346 TMCB\\
Brigham Young University, Provo, UT 84602, USA} \email[Kening
Lu]{klu@math.byu.edu}
\author{Bj{\"o}rn Schmalfu{\ss }}
\address[Bj{\"o}rn Schmalfu{\ss }]{Institut f\"{u}r Stochastik\\
Friedrich Schiller Universit{\"a}t Jena, Ernst Abbe Platz 2, 77043\\
Jena,
Germany
 }
\email[Bj{\"o}rn Schmalfu{\ss }]{bjoern.schmalfuss@uni-jena.de}
\subjclass{Primary: ;  Secondary: }
\keywords{Stochastic PDEs, Hilbert-valued fractional Brownian motion, path--wise solution.}
\begin{document}
\begin{abstract}
In this paper we investigate the existence and some useful properties of the L\'evy areas of Ornstein--Uhlenbeck processes associated to Hilbert--space-valued fractional Brownian--motions with Hurst parameter $H\in (1/3,1/2]$.  We prove that this stochastic area has a H{\"o}lder--continuous version with sufficiently large H{\"o}lder--exponent and that can be approximated by smooth areas. In addition, we prove the stationarity of this area.
\end{abstract}
\maketitle

\section{Introduction}

During the last decades new techniques have been developed generalizing the well known Ito--or Stratonovich--integration.
For fundamental publications in this area see for instance Lyons an Qian \cite{Lyons} for the so called {\em Rough--Paths} theory and Z{\"a}hle \cite{Zah98}
for the {\em Fractional Calculus}\footnote{The name fractional calculus theory goes back to D. Nualart.} theory.
In particular, these techniques allow to have stochastic integrators which are more general than the Brownian motion.
A candidate for such an integrator is for instance the fractional Brownian motion. This stochastic process does not have in general the semi--martingale property,
which would allow to define the stochastic integral as a limit in probability if the integrator satisfies special measurability and integrability conditions,
see e.g. Karatzas and Shreve \cite{KarShr91}. Another advantage of these new techniques is to treat stochastic integrals in a path--wise way.
In particular, for any sufficient regular integrand and integrator with more general properties than the bounded variation property, an integral can be defined.
This kind of integrals goes back to Young \cite{You36} which allows to consider integrals for H{\"o}lder--continuous integrands and integrators fulfilling special conditions with respect to the H{\"o}lder--exponents. In practice, this integral cannot be used to replace the well known Ito--integral
by a Young--integral if the integrator is a Brownian motion and the integrand has a H{\"o}lder--exponent less than or equal to $1/2$.

Our reason for dealing with this type of new integral is to take advantage of the path--wise property in order to further introduce {\em Random Dynamical Systems} (RDS) for infinite dimensional differential equations, namely, for stochastic evolution equations and stochastic partial differential equations as well. The random driver of this kind of equation will be in general a trace class fractional Brownian--motion with Hurst--parameter $H\in (1/3,1/2]$.
To introduce an RDS one needs at first a model for a noise which is called a {\em metric dynamical system}: Consider the quadruple $(\Omega,\fF,\PP,\theta)$
where $(\Omega,\fF,\PP)$ is a probability space and $\theta$ is a {\em measurable flow} on $\Omega$:
\begin{align*}
  &\theta: (\RR\times \Omega,\bB(\RR)\otimes \fF)\to (\Omega,\fF) \\
  & \theta_t\circ\theta_\tau=\theta_t\theta_\tau=\theta_{t+\tau},\qquad \theta_0={\rm id}_\Omega.
\end{align*}
An Hilbert--(or topological) space RDS is a measurable mapping
\begin{equation*}
  \phi: (\RR^+\times\Omega\times V,\bB(\RR^+)\otimes \fF\otimes \bB(V))\to (V,\bB(V))
\end{equation*}
satisfying the cocycle property
\begin{equation*}
  \phi(t+\tau,\omega,x)=\phi(t,\theta_\tau\omega,\phi(\tau,\omega,x)),\qquad \phi(0,\omega,x)=x,
\end{equation*}
for all $t,\,\tau\in \RR^+$, $x\in V$ and {\em for all} $\omega\in\Omega$ or at least for all $\omega$ of a $\theta$--invariant set $\tilde \Omega \in \fF$ of full $\PP$--measure which is independent of $x,\,t,\,\tau$. This is different to the fact that {\em a property holds almost surely} for any $x\in V$, which is often used  in Stochastic Analysis, since exceptional sets depending on $t,\,\tau$ and $x$ are in most cases forbidden when dealing with the cocycle property.

\smallskip

An example of a metric dynamical system is for instance $(C_0,\bB(C_0),\PP_H, \theta)$, where the probability space $(C_0,\bB(C_0),\PP_H)$ is the canonical probability space such that $C_0$ is the space of continuous functions on $\RR$ with values in a separable Hilbert--space $V$, $\bB(C_0)$ is its Borel--$\sigma$--algebra, $\PP_H$ is the distribution on $\bB(C_0)$ of a trace class fractional Brownian--motion $\omega$ with Hurst--parameter $H\in (0,1)$, and $\theta: (\RR\times C_0,\bB(\RR)\otimes \bB(C_0))\to (C_0,\bB(C_0))$ is given by
\begin{equation}\label{shift}
\theta_\tau(t)=\omega(t+\tau)-\omega(\tau), \quad \text{for } t,\tau\in \RR,
\end{equation}
see Arnold \cite{Arn98} or Maslowski and Schmalfuss \cite{BB}.

\smallskip

It is known that under typical conditions on the coefficients an ordinary Ito--equation generates an RDS, see Arnold \cite{Arn98}.
The main instrument to obtain this property is the Kolmogorov--theorem on the existence of a H{\"o}lder--continuous version of a random field.
Such a random field is derived from an Ito--equation where the parameters of this field are the time and the non-random initial condition.
The so called perfection technique then can be used to conclude the existence of a version of the ordinary Ito--equation which defines an RDS. However, there is no appropriate version of the Kolmogorov--theorem for infinite dimensional random fields that could be applied to show that solutions of stochastic evolution equations generate an RDS, and therefore this property is rather an open problem, although there are partial results for particular cases, see e.g. the recent papers \cite{BGM}, \cite{CKSch}, \cite{FL}, \cite{GGSch} and \cite{GLSch}.

\bigskip

Consider a stochastic evolution equation
\begin{equation*}
  du=Audt+G(u)d\omega
\end{equation*}
where $A$ is the generator of an analytic stable semigroup $S$ on the separable Hilbert--space $V$ and $\omega$ is a $\beta$--H{\"o}lder--continuous fractional--Brownian motion in $V$ with Hurst--parameter $H\in (1/3,1/2]$, so that $\beta \in (1/3,1/2)$. This equation has the mild interpretation
\begin{equation}\label{steeq3}
  u(t)=S(t)u_0+\int_0^tS(t-r)G(u(r))d\omega(r)
\end{equation}
where $u_0$ is a non-random initial condition in $V$. The integral in the above equation has to be interpreted in a fractional sense. For a good understanding of that integral we refer to \cite{HuNu} and \cite{GLSch-local}.

For the following let $\bar\Delta_{a,b}\subset\RR^2$ be the set of the pairs $(s,t)$ such that $-\infty<a\le s\le t\le b<\infty$. Let $V\otimes V$ be the Hilbert--tensor space
of $V$ with tensor product $\otimes_V$. We consider now functions
\begin{equation*}
  \bar\Delta_{0,T}\ni (s,t)\mapsto (u\otimes\omega)(s,t).
\end{equation*}
The reason to introduce these elements is to
interpret the integral of \eqref{steeq3} in a fractional sense. We need to consider the tensor product of a possible solution $u$ and a noise path $\omega$. In particular, for a smooth $\omega$ this tensor product is given by
\begin{align}\label{ns}
\begin{split}
    (u\otimes\omega)(s,t)=&\int_s^t(S(\xi-s)-{\rm id})u(s)\otimes_V\omega^\prime(\xi)d\xi\\
    &+\int_s^t\int_s^\xi S(\xi-r)G(u(r))\omega^\prime(r)dr\otimes_V\omega^\prime(\xi)d\xi,
    \end{split}
\end{align}
and, exchanging the order of integration, the last integral of \eqref{ns} can be written as
\begin{equation*}
  \int_s^t G(u(r))D_1(\omega\otimes_S\omega)(r,t)dr\footnotemark
\end{equation*}
\footnotetext{$D_1$ means the first derivative w.r.t. the first variable of $(\omega\otimes_S\omega)(\cdot,\cdot)$.}
where $(\omega\otimes_S\omega)(s,t)$  is defined by
\begin{align}\begin{split}
  L_2(V,\hat V)\ni E\mapsto E(\omega\otimes_S\omega)(s,t)&=\int_s^t\int_s^\xi
    S(\xi-r)E\omega^\prime(r) \otimes_V \omega^\prime(\xi)drd\xi\\
   &=\int_s^t\int_r^t
   S(\xi-r)E\omega^\prime(r) \otimes_V \omega^\prime(\xi)d\xi dr
   \end{split}\label{omegaSS}
\end{align}
for $(s,t)\in\bar\Delta_{0,T}$.  Here $\hat V$ is another Hilbert--space to be determined later and $E$ is an element of the Hilbert--Schmidt space $L_2(V,\hat V)$.
Let $C_{2\beta}(\bar \Delta_{a,b},L_2(L_2(V,\hat V),V\otimes V))$ the space of $2\beta$--H{\"o}lder--continuous fields on $\bar\Delta_{a,b}$
with values in $V\otimes V$ with norm
\begin{equation}\label{steeq102}
 \|v\|_{2\beta}=\|v\|_{2\beta,a,b}= \sup_{(s,t)\in\bar\Delta_{a,b}}\frac{\|v\|_{L_2(L_2(V,\hat V),V\otimes V)}}{|t-s|^{2\beta}}<\infty.
\end{equation}

We know that $\omega\in C_{\beta}([0,T];V)$ for every $\beta<H$. We also denote by $\omega^n$ a piecewise linear (continuous) approximation of $\omega$ with respect to an equidistant partition of length $2^{-n}T:=\delta$ such that $\omega^n(t)=\omega(t)$ for the partition points $t$. For these $\omega^n$ we can define
$(\omega^n\otimes_S\omega^n)$ by the right hand side of \eqref{omegaSS}.\\

The main purpose of this paper is to prove the following result, a property which is needed to establish the existence of solutions to (\ref{steeq3}). For a detailed description on the construction of solutions to (\ref{steeq3}) we refer the reader to the paper \cite{GLSch-local}.

\begin{theorem}\label{stet1}
Let $(\omega^n)_{n\in\mathbb{N}}$ be the sequence  of piecewise linear approximations of some $\omega$ introduced above such that
$((\omega^n\otimes_S\omega^n))_{n\in\mathbb{N}}$ is defined by
\eqref{omegaSS}. Then for any $\beta<H$ the sequence
$((\omega^n,(\omega^n\otimes_S\omega^n)))_{n\in\mathbb{N}}$ converges to
$(\omega,(\omega\otimes_S\omega))$ in
$C_{\beta}([0,T];V)\times C_{2\beta} (\bar \Delta_{0,T}; L_2(L_2(V,\hat V),V\otimes V))$ on a set of full measure.
In particular $(\omega\otimes_S\omega)$ is continuous.
\end{theorem}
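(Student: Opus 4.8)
The plan is to follow the classical Kolmogorov–Chentsov / Garsia–Rodemich–Rumsey route, but adapted to the two-parameter "sewing"-type object $(\omega\otimes_S\omega)$ and carried out simultaneously with the first-level approximation $\omega^n\to\omega$. First I would recall that $\omega$ is a trace class fractional Brownian motion in $V$ with Hurst parameter $H\in(1/3,1/2]$, so that for each $\beta<H$ we have $\omega\in C_\beta([0,T];V)$ almost surely, and the piecewise-linear interpolants satisfy the quantitative bound $\|\omega^n-\omega\|_{\beta'}\le C(\omega)\,2^{-n(\beta''-\beta')}$ for $\beta<\beta'<\beta''<H$ (this is standard: interpolation loses no regularity, and the gap $\beta''-\beta'$ gives the geometric rate over the dyadic mesh $\delta=2^{-n}T$). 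Fix such $\beta,\beta',\beta''$ once and for all; the convergence $\omega^n\to\omega$ in $C_\beta$ is then immediate and we concentrate on the second component.

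The core is moment estimates for the increments of $(\omega\otimes_S\omega)$ and for the differences $(\omega^n\otimes_S\omega^n)-(\omega^m\otimes_S\omega^m)$. For a fixed Hilbert–Schmidt operator $E$, expand $E(\omega\otimes_S\omega)(s,t)$ from \eqref{omegaSS}; because $\omega$ is Gaussian, every $p$-th moment of the double stochastic integral $\int_s^t\!\int_r^t S(\xi-r)E\,d\omega(r)\otimes_V d\omega(\xi)$ is controlled by its second moment (Gaussian hypercontractivity / Wick chaos of order two), and the second moment reduces to a deterministic double integral against the covariance kernel of fractional Brownian motion. Using the analyticity and stability of $S$ (so $\|S(\xi-r)-{\rm id}\|$ is $O(|\xi-r|^\theta)$ for small $\theta$, and $\|S(\xi-r)\|$ is bounded) together with the singular kernel $|u-v|^{2H-2}$, one obtains
\[
\EE\big\|(\omega\otimes_S\omega)(s,t)\big\|_{L_2(L_2(V,\hat V),V\otimes V)}^{p}\le C_p\,|t-s|^{2Hp}.
\]
I would run the same computation for $\omega^n$, where the correlation kernel of $(\omega^n)'$ is the mollified kernel, to get the uniform-in-$n$ bound $\EE\|(\omega^n\otimes_S\omega^n)(s,t)\|^p\le C_p|t-s|^{2Hp}$, and — crucially — the difference bound $\EE\|\big((\omega^n\otimes_S\omega^n)-(\omega\otimes_S\omega)\big)(s,t)\|^p\le C_p\,2^{-n\gamma p}\,|t-s|^{2Hp-\epsilon p}$ for some $\gamma>0$, exploiting that the two kernels agree up to error $O(\delta^{\,\cdot})$ on the relevant scales. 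Taking $p$ large enough that $2H-\epsilon>2\beta$ with room to spare, a two-parameter Garsia–Rodemich–Rumsey inequality (or the Kolmogorov criterion for fields indexed by $\bar\Delta_{0,T}$, as in the references on $C_{2\beta}$-valued sewing) upgrades these moment bounds to: a $2\beta$-Hölder-continuous version of $(\omega\otimes_S\omega)$ exists, and $\EE\|(\omega^n\otimes_S\omega^n)-(\omega\otimes_S\omega)\|_{2\beta}^p\le C_p\,2^{-n\gamma' p}$.

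From the last estimate, Chebyshev and Borel–Cantelli give $\|(\omega^n\otimes_S\omega^n)-(\omega\otimes_S\omega)\|_{2\beta}\to 0$ along the full dyadic sequence on a set $\Omega_0$ of full measure; intersecting with the full-measure set on which $\omega^n\to\omega$ in $C_\beta$ proves the stated convergence of the pair in $C_\beta([0,T];V)\times C_{2\beta}(\bar\Delta_{0,T};L_2(L_2(V,\hat V),V\otimes V))$, and since each $(\omega^n\otimes_S\omega^n)$ is continuous (indeed smooth), so is the uniform limit. I expect the main obstacle to be the difference estimate: one must carefully track how replacing $\omega$ by its piecewise-linear interpolant perturbs the second-level object, which means splitting the double integral in \eqref{omegaSS} over the dyadic cells, bounding the "diagonal" cells (where the interpolation error is largest but the domain is small, of size $\delta$) separately from the "off-diagonal" contribution, and ensuring the singular kernel $|u-v|^{2H-2}$ together with the factor from $S(\xi-r)-{\rm id}$ still yields integrable bounds with a genuine power of $\delta=2^{-n}T$ to spare; the constraint $H>1/3$ is exactly what is needed to keep $2\beta<2H$ and simultaneously absorb the loss $\epsilon$ coming from the kernel singularity.
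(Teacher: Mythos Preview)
Your overall architecture --- moment bounds on the two-parameter increment, Gaussian hypercontractivity to reduce to second moments, a Garsia--Rodemich--Rumsey lemma to pass to the $C_{2\beta}$-norm, and Borel--Cantelli for almost-sure convergence --- is precisely what the paper does in Section~\ref{stes3} for the Brownian-motion case $H=1/2$, including the splitting of the domain into diagonal triangles and off-diagonal dyadic rectangles (their Lemma~\ref{stel5}). For that case your sketch is essentially correct and close to the paper.

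For the genuinely fractional case $H<1/2$ (Section~\ref{stes2}), however, the paper takes a different route, and the difference exposes a gap in your plan. After an integration by parts the paper decomposes $E(\omega\otimes_S\omega)(s,t)$ as in \eqref{lastf} into (i) the \emph{standard} L\'evy area $\int_s^t E(\omega(\xi)-\omega(s))\otimes_V d\omega(\xi)$, whose construction and convergence under piecewise-linear approximation is imported componentwise from Deya--Neuenkirch--Tindel, plus (ii) a term carrying the semigroup, $\int_s^t A\int_s^\xi S(\xi-r)E(\omega(r)-\omega(s))\,dr\otimes_V d\omega(\xi)$, which is handled \emph{pathwise} as a Young/fractional integral: the inner convolution by $S$ lifts the H\"older exponent to some $\gamma$ with $\gamma+\beta>1$, so no probabilistic input beyond $\omega\in C_\beta$ is needed for (ii). Your proposal to attack the full double integral directly via the kernel $|u-v|^{2H-2}$ breaks down here: for $H\le 1/2$ one has $2H-2\le -1$, so that kernel is not locally integrable and the step ``the second moment reduces to a deterministic double integral against the covariance kernel'' is ill-posed as written in exactly the regime the theorem covers. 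A correct direct second-chaos construction for $H\in(1/3,1/2)$ does exist (Coutin--Qian; this is what \cite{DeNeTi10} ultimately relies on), but it is considerably more delicate than the integrable-kernel picture you invoke, and the paper's integration-by-parts decomposition is precisely the device that sidesteps redoing that analysis in the presence of the semigroup.
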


In the following we consider only the statements of this theorem with respect to $(\omega\otimes_S\omega)$. The convergence properties
of $(\omega^n)_{n\in\mathbb{N}}$ follow in a similar and simpler manner. We omit here.

\smallskip

The object $(\omega\otimes_S\omega)$ will be called {\it L\'evy area of the Ornstein-Uhlembeck process.} Since a priori it is not clear whether $(\omega\otimes_S\omega)$ is well defined, in what follows we are going to give an appropriate meaning to this term by presenting two proofs of the above theorem. The first one is related to $\beta$-H{\"o}lder continuous paths
covering the case of a fractional Brownian--motion for an appropriate Hilbert--space $\hat V$. The second proof deals with the case of a Brownian-motion for a more general space $\hat V$.\\

As we have said, $A$ is the  generator of the analytic semigroup $S$ on the separable Hilbert--space $V$. We also suppose that $-A$ is positive and symmetric such that its inverse is compact.
Then $-A$ has a positive point spectrum $(\lambda_i)_{i\in\NN}$ of finite multiplicity such that $\lim_{i\to\infty}\lambda_i=\infty$.
We denote by $(e_i)_{i\in\NN}$ the associated eigenelements which form a complete orthonormal system in $V$.
Furthermore, $-A$ generates for $\kappa\in\RR$ the separable Hilbert--spaces
\begin{equation*}
  D((-A)^\kappa)=:V_\kappa=\{u=\sum_i\hat u_ie_i:\|u\|_{V_\kappa}^2=\sum_i|\hat u_i|^2\lambda_i^{2\kappa}<\infty\},
\end{equation*}
with $V=V_0$.

\section{The construction of $(\omega\otimes_S\omega)$ for a fractional Brownian--motion}\label{stes2}

Let us begin this section by introducing a $V$-valued fractional Brownian--motion. For brevity we suppose that $\omega$ can be presented by
\begin{equation*}
  \omega(t)=\sum_iq_i^\frac12\omega_i(t)e_i
\end{equation*}
where $(\omega_i)_{i\in \mathbb N}$ is a sequence of one-dimensional independent $\beta$--H{\"o}lder continuous standard fractional Brownian--motions for any $\beta<H$ such that
 $\sum_iq_i<\infty$. Then $\omega$ can be interpreted as a $\beta$--H{\"o}lder--continuous fractional Brownian--motion in $V$ with Hurst--parameter $H\in (0,1)$.
 The covariance is given by the operator $Q$ with is a diagonal operator in our standard basis with diagonal elements $q_i$.
 Let us also denote by $\omega^n,\,\omega_i^n$ the piecewise linear approximations with respect to the equidistant partition  $\{t_i^n\}$ of $[0,T]$ of length $2^{-n}T=\delta$.\\

Throughout this section we assume that  $\omega$ has a  Hurst--parameter $H\in (1/3,1/2]$. Since $\omega^n$ is smooth we can define $(\omega^n\otimes_S\omega^n)(s,t)$ as a Bochner--integral with respect to the Lebesgue--measure:
\begin{equation*}
  E(\omega^n\otimes_S\omega^n)(s,t)=\int_s^t\int_s^\xi S(\xi-r)Ed\omega^n(r)\otimes_V d\omega^n(\xi)
\end{equation*}
for $E\in L_2(V,\hat V)$.
By an integration by parts argument this integral can be rewritten as
\begin{align*}
\begin{split}
    \int_s^t&E(\omega^n(\xi)-\omega^n (s))\otimes_V  d\omega^n(\xi)+\int_s^tA\int_s^\xi S(\xi-r)E(\omega^n(r)-\omega^n(s))dr\otimes_V d\omega^n(\xi).
\end{split}
\end{align*}
This motivates to interpret $(\omega\otimes_S\omega)(s,t)$ as
\begin{equation}\label{lastf}
  \int_s^tE(\omega(\xi)-\omega (s))\otimes_V  d\omega(\xi)
    +\int_s^tA\int_s^\xi S(\xi-r)E(\omega(r)-\omega(s))dr\otimes_V d\omega(\xi)
\end{equation}
where we have to give an appropriate meaning to both integrals. We start with the first one which is abbreviated in the following by $ E(\omega\otimes\omega)(s,t)$.

\begin{lemma}\label{stel2}
Let $\hat V=V_\kappa$ for $\kappa>0$ such that $\sum_i\lambda_i^{-2\kappa}<\infty$.
The sequence $((\omega^n\otimes\omega^n))_{n\in\NN}$,
which elements can be represented component--wise by
\begin{equation*}
q_j^\frac12 q_k^\frac12\int_s^t(\omega_j^n(\xi)-\omega_j^n(s)) d\omega_k^n(\xi)
\end{equation*}
converges on a set of full measure  in $C_{2\beta} (\bar\Delta_{0,T};L_2(L_2(V,V_\kappa),V\otimes V))$ to $(\omega\otimes\omega)$.
\end{lemma}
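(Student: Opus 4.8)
The plan is to establish the convergence of $((\omega^n\otimes\omega^n))_{n\in\NN}$ via a Kolmogorov/Garsia--Rodemich--Rumsey type argument applied component-wise, and then sum over components using the decay of $\sum_i q_i$ and $\sum_i\lambda_i^{-2\kappa}$. The key point is that the bilinear object $(\omega^n\otimes\omega^n)(s,t)$ lives in $L_2(L_2(V,V_\kappa),V\otimes V)$, and we must control its $2\beta$-H\"older norm uniformly in $n$ and the $2\beta$-H\"older norm of the increments $(\omega^n\otimes\omega^n)-(\omega^m\otimes\omega^m)$, so that convergence follows in that space on a set of full measure. Since in each coordinate $(j,k)$ we are dealing with the classical L\'evy area of two one-dimensional fractional Brownian motions (the pathwise Young/rough-path integral $\int_s^t(\omega_j-\omega_j(s))\,d\omega_k$ for $j\neq k$, and the explicit $\tfrac12(\omega_j(t)-\omega_j(s))^2$ for $j=k$), the one-dimensional theory is available.

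First I would fix a single pair $(j,k)$ and write $A^n_{jk}(s,t) = \int_s^t(\omega_j^n(\xi)-\omega_j^n(s))\,d\omega_k^n(\xi)$. Using the multiplicative (Chen) identity $A^n_{jk}(s,t) = A^n_{jk}(s,u) + A^n_{jk}(u,t) + (\omega_j^n(u)-\omega_j^n(s))(\omega_k^n(t)-\omega_k^n(u))$ and the standard moment bounds for increments of fBm, one gets, for any $p$ large enough and any $\beta<H$, a uniform estimate
\begin{equation*}
\EE\,|A^n_{jk}(s,t)|^{2p} \le C_p\, q_j^{-p}q_k^{-p}\cdot(\text{normalization}) \cdot |t-s|^{4\beta p},
\end{equation*}
together with the analogous bound for the differences $A^n_{jk}(s,t)-A^m_{jk}(s,t)$ with an extra factor tending to $0$ as $n,m\to\infty$ (quantitatively like a negative power of $\delta=2^{-n}T$, coming from the rate of convergence of the piecewise-linear interpolation of fBm in $\beta$-H\"older norm). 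By the Garsia--Rodemich--Rumsey inequality applied on $\bar\Delta_{0,T}$ this converts to a moment bound on the $2\beta$-H\"older seminorm of $A^n_{jk}$ and of $A^n_{jk}-A^m_{jk}$; a Borel--Cantelli argument along $n=m$ and $n>m$ then gives almost sure convergence of $A^n_{jk}$ to a limit $A_{jk}$ in the one-dimensional $C_{2\beta}$ space.

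Next I would assemble the components. The operator $(\omega^n\otimes\omega^n)(s,t)$ acts on $E\in L_2(V,V_\kappa)$ and returns an element of $V\otimes V$ whose $(e_j\otimes e_k)$-component is $q_j^{1/2}q_k^{1/2}A^n_{jk}(s,t)$ times the appropriate matrix entry of $E$; computing the Hilbert--Schmidt norm in $L_2(L_2(V,V_\kappa),V\otimes V)$ produces a weighted double sum of the $|A^n_{jk}(s,t)|^2$ with weights $q_jq_k\lambda_j^{-2\kappa}$ (the $\lambda_j^{-2\kappa}$ coming from the dual norm on $L_2(V,V_\kappa)$ and the $q_k$ from the covariance in the second slot). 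Taking expectations of the $2\beta$-H\"older seminorm to a high power and using the per-component bounds, the constants $q_j^{-p}q_k^{-p}$ from the moment estimates are exactly cancelled, so the series $\sum_{j,k} q_jq_k\lambda_j^{-2\kappa}\,\EE\|A^n_{jk}\|_{2\beta}^{2p}$ is dominated by $\big(\sum_j q_j\lambda_j^{-2\kappa}\big)\big(\sum_k q_k\big)<\infty$, which is finite precisely because $\sum_i q_i<\infty$ and $\sum_i\lambda_i^{-2\kappa}<\infty$. This gives a uniform-in-$n$ bound on $\EE\|(\omega^n\otimes\omega^n)\|_{2\beta}^{2p}$ and, via the difference estimates, $\EE\|(\omega^n\otimes\omega^n)-(\omega^m\otimes\omega^m)\|_{2\beta}^{2p}\to0$; Borel--Cantelli then upgrades this to a.s.\ convergence in $C_{2\beta}(\bar\Delta_{0,T};L_2(L_2(V,V_\kappa),V\otimes V))$, identifying the limit as $(\omega\otimes\omega)$.

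The main obstacle is twofold. First, one must be careful that the Hilbert--Schmidt summability really closes: the naive moment bound in each coordinate carries factors $q_j^{-1}q_k^{-1}$ (because $\omega_j = q_j^{-1/2}$ times the component of $\omega$ is not how it is scaled — rather $A^n_{jk}$ is defined for the normalized $\omega_j^n,\omega_k^n$ and the $q$'s reappear as prefactors), so the bookkeeping of exactly which powers of $q_i$ and $\lambda_i$ appear where is the delicate part, and it is what forces the hypothesis $\sum_i\lambda_i^{-2\kappa}<\infty$ on $\hat V=V_\kappa$. Second, obtaining an explicit rate of convergence for the piecewise-linear approximation of fBm in $\beta$-H\"older norm — uniform over the relevant range $\beta<H$ — and checking it survives the GRR step requires some care; this is where the restriction $H>1/3$ (equivalently $4\beta>1$, i.e.\ $2\beta>1/2$, making the area a genuine rough-path object with positive H\"older regularity $2\beta$) is used. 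Everything else is routine: Chen's relation, GRR, Borel--Cantelli, and Fubini to interchange the component sum with expectation.
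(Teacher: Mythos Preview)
Your overall strategy---reduce to the scalar L\'evy areas $A^n_{jk}(s,t)=\int_s^t(\omega_j^n(\xi)-\omega_j^n(s))\,d\omega_k^n(\xi)$, control their $2\beta$-H\"older seminorms via a Garsia--Rodemich--Rumsey argument together with the Chen relation, obtain a rate in $n$ from the $\beta$-H\"older convergence of the piecewise-linear interpolation, and then Borel--Cantelli---is exactly the paper's approach. The paper does not redo the GRR step but imports it directly as Lemma~3.7 of Deya--Neuenkirch--Tindel, which gives
\[
\|A^n_{jk}\|_{2\beta}\le K_{\beta,p}\Big(R^{j,k}_{n,p}+\|\omega_j-\omega_j^n\|_\beta\|\omega_k\|_\beta+\|\omega_k-\omega_k^n\|_\beta\|\omega_j^n\|_\beta\Big),
\]
where $A^n_{jk}$ here denotes the \emph{difference} between the limiting area and its approximation, and $R^{j,k}_{n,p}$ is the GRR integral with $\EE(R^{j,k}_{n,p})^{2p}\le c\,n^{-4p(H-\beta')}$. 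Your Cauchy-sequence formulation is equivalent.

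There is, however, a real bookkeeping error in your assembly step. The Hilbert--Schmidt norm in $L_2(L_2(V,V_\kappa),V\otimes V)$ is computed with respect to the orthonormal basis $E_{ij}$ of $L_2(V,V_\kappa)$ given by $E_{ij}e_k=\delta_{jk}\,\lambda_i^{-\kappa}e_i$; the index $i$ (the direction in the \emph{target} space $V_\kappa$) is independent of the noise indices $j,k$. One gets
\[
\|(\omega^n\otimes\omega^n)(s,t)\|_{L_2(L_2(V,V_\kappa),V\otimes V)}^2
\le\Big(\sum_i\lambda_i^{-2\kappa}\Big)\sum_{j,k}q_jq_k\,|A^n_{jk}(s,t)|^2,
\]
so the factor $\sum_i\lambda_i^{-2\kappa}$ is a \emph{separate} sum, not a weight $\lambda_j^{-2\kappa}$ attached to one of the noise indices as you wrote. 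Your version, with series $\sum_j q_j\lambda_j^{-2\kappa}$, would be finite automatically (since $\lambda_j^{-2\kappa}\le\lambda_1^{-2\kappa}$) and so would not explain why the hypothesis $\sum_i\lambda_i^{-2\kappa}<\infty$ is needed at all; in fact it is precisely this free index $i$ that forces the condition on $\kappa$. Once this is corrected, the Jensen/Chebyshev step (pass from the $p$-th moment of $\sum_{j,k}q_jq_k(\cdot)^2$ to $({\rm tr}\,Q)^{2(p-1)}\sum_{j,k}q_jq_k\EE(\cdot)^{2p}$) and Borel--Cantelli proceed exactly as you outline. Also, drop the stray $q_j^{-p}q_k^{-p}$ in your moment bound: the $\omega_j$ are already standard fBm, so $\EE|A^n_{jk}(s,t)|^{2p}\le C_p|t-s|^{4\beta p}$ with no $q$'s.
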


\begin{proof}
Consider the orthonormal basis $(E_{ij})_{i,j\in\NN}$ of $L_2(V,V_\kappa)$ given  by
\begin{equation*}
E_{ij}e_k=\left\{\begin{array}{lcl}
    0&:& j\not= k\\
   \frac{e_i}{\lambda_i^\kappa}&:& j= k,
    \end{array}
    \right.
\end{equation*}
and $(e_l\otimes_V e_k)_{l,k\in\NN}$, the orthonormal basis of $V\otimes V$.
First note that, for $(s,t)\in \bar\Delta_{0,T}$,
\begin{align*}
\|(\omega^n \otimes \omega^n)(s,t)\|^2_{L_2(L_2(V,V_\kappa),V\otimes V)}&= \sum_{i,j}\sum_{l,k}  (E_{ij}( \omega^n \otimes \omega^n )(s,t), e_l\otimes_V e_k)_{V\otimes V}^2 \\
& \leq
  \sum_i\lambda_i^{-2\kappa}\sum_{i,j}q_jq_k \bigg(\int_s^t(\omega_j^n(\xi)-\omega_j^n(s))d\omega_k^n(\xi)\bigg)^2,
\end{align*}
and hence we have to study the behavior of the last sum. Let us denote
\begin{align*}
 A_{j,k}^n(s,t)\footnotemark:&=   \int_s^t(\omega_j(\xi)-\omega_j(s))d\omega_k(\xi)-\int_s^t(\omega_j^n(\xi)-\omega_j^n(s))d\omega_k^n(\xi).
\end{align*}
By symmetry we can assume $j\leq k$. In fact we assume that $j<k$ since the case $j=k$ is easier, see a comment at the end of the proof.
\footnotetext{In \cite{DeNeTi10}, $A_{j,k}^n(s,t)$ are considered over the square $[0,T]^2$. However, since they are symmetric w.r.t. the diagonal of $[0,T]$ it is sufficient to consider these elements over $\bar\Delta_{0,T}$.}

To estimate the continuous element $A_{j,k}^n(s,t)$ we apply the Lemma 3.7 in Deya {\it et al.} \cite{DeNeTi10}, which claims that for $p\geq 1$ there exists $K_{\beta, p}$ such that
\begin{align}\label{tail}
\|A_{j,k}^n\|_{2\beta} \leq K_{\beta, p} (R_{n,p}^{j,k}+ \|\omega_j-\omega_j^n\|_{\beta} \|\omega_k\|_{\beta}+\|\omega_k-\omega_k^n\|_{\beta} \|\omega_j^n\|_{\beta}),
\end{align}
where
\begin{align*}
R_{n,p}^{j,k}:=\bigg(\int_0^T\int_0^T\frac{|A_{j,k}^n(s,t)|^{2p}}{|t-s|^{4\beta p+2}}dsdt\bigg)^{1/(2p)}.
\end{align*}
In particular, from the
proof of Lemma 3.7 in \cite{DeNeTi10} we know that
\begin{equation*}
   \EE (R_{n,p}^{j,k})^{2p}\le c n^{-4p(H-\beta^{\prime})}<\infty,
\end{equation*}
for $\beta<\beta^{\prime}<H$, being $\beta^{\prime}$ close enough to $H$ and $p$ large enough. Indeed, let us take $p$ large enough such that $4p(H-\beta^{\prime})>1$, and thus
\begin{align*}
    \PP(\sum_{j,k}q_j q_k(R_{p,n}^{j,k})^2>o(n)^2)\le \frac{({\rm tr}_V Q)^{2(p-1)}}{o(n)^{2p}}\sum_{j,k}q_j q_k \EE(R_{n,p}^{j,k})^{2p}
    \le \frac{c n^{-4p(H-\beta^{\prime})}} {o(n)^{2p}}.
\end{align*}
For an appropriate sequence $(o(n))_{n\in\NN}$ with limit zero, the right-hand side has a finite sum. Then by the Borel--Cantelli--lemma, $(\sum_{j,k}q_jq_k(R_{p,n}^{j,k})^2)_{n\in\NN}$ tends to zero almost surely. In a similar manner we obtain the convergence of the last terms in \eqref{tail}. It suffices to take into account that, for $\beta< \beta^{\prime}<H$,
\begin{align}\label{eq20}
\begin{split}
    \|\omega_j-\omega_j^n\|_{\beta}\le  G_{\beta^{\prime}}(j,\omega)n^{\beta-\beta^{\prime}},\;
    \|\omega_j\|_{\beta}\le G_{\beta^{\prime}}(j,\omega),\;
    \|\omega_j^n\|_{\beta}\le G_{\beta^{\prime}}(j,\omega)
    \end{split}
    \end{align}
where $G_{\beta^{\prime}}(j,\omega)\ge   \|\omega_j\|_{\beta^{\prime}}$ and  $G_{\beta^{\prime}}(j,\omega) \in L_{p}(\Omega)$  for any  $p\in\NN$ are iid random variables, see Kunita \cite{Kunita90} Theorem  1.4.1. We then have
\begin{align*}
   & \PP(\sum_{j,k}q_j q_k\|\omega_j^n-\omega_j\|_{\beta}^2\|\omega_k\|_{\beta}^2>o(n)^2)\\
    \le & \frac{({\rm tr}_VQ)^{2(p-1)}}{o(n)^{2p}} \sum_{j,k}q_j q_k (\EE G_{\beta^{\prime}}(j,\omega)^{4p})^\frac12(\EE G_{\beta^{\prime}}(k,\omega)^{4p})^\frac12n^{2p(\beta-\beta^{\prime})}\le \frac{c n^{2p(\beta-\beta^{\prime})}}{o(n)^{2p}}.
\end{align*}
For $p$ chosen sufficiently large and an appropriate zero--sequence $(o(n))_{n\in\NN}$ we obtain the almost sure convergence of
$(\sum_{j,k}q_j q_k\|\omega_j^n-\omega_j\|_{\beta}^2 \|\omega_k\|_{\beta}^2)_{n\in \NN}$. Similarly we can treat the last term  of \eqref{tail}, that is,
$(\sum_{j,k}q_j q_k\|\omega_j^n-\omega_j\|_{\beta}^2 \|\omega_k^n\|_{\beta}^2)_{n\in \NN}$.
Finally,
\begin{align*}
 A_{j,j}^n(s,t)&=   \frac{1}{2}(\omega_j(t)-\omega_j(s))^2- \frac{1}{2}(\omega_j^n(t)-\omega_j^n(s))^2
 \end{align*}
 and thanks to \eqref{eq20} $\| A_{j,j}^n\|_{2\beta}\leq G_{\beta^{\prime}}(j,\omega)^2n^{2(\beta-\beta^{\prime})}$, which completes the proof.
\end{proof}

\begin{lemma}\label{stel3}
Suppose that there exists $\gamma$ such that $\gamma+\beta>1$ and
\begin{equation*}
\sum_{i}\lambda_i^{2\gamma-2\kappa}<\infty.
\end{equation*}
Then the mapping
\begin{align*}
  ((s,t),E) & \in \bar\Delta_{0,T}\times L_2(V,V_\kappa)\mapsto \int_s^t\bigg(A\int_s^\xi S(\xi-r)E(\omega(r)-\omega(s))dr\bigg)\otimes_Vd\omega(\xi)
\end{align*}
is in $C_{2\beta}(\bar\Delta_{0,T};L_2(L_2(V,V_\kappa),V\otimes V))$.
\end{lemma}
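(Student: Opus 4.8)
The plan is to reduce, componentwise, to scalar Young integrals, with the pathwise regularity of an Ornstein--Uhlenbeck-type convolution as the technical core. Fix $(s,t)\in\bar\Delta_{0,T}$ and set $\Psi_E(\xi):=A\int_s^\xi S(\xi-r)E(\omega(r)-\omega(s))\,dr$, so $\Psi_E(s)=0$. For a fixed $E\in L_2(V,V_\kappa)$ the path $\xi\mapsto\Psi_E(\xi)$ lies in $C^\gamma([s,t];V)$: splitting $\Psi_E(\xi)=A\int_s^\xi S(\xi-r)E(\omega(r)-\omega(\xi))\,dr+(S(\xi-s)-{\rm id})E(\omega(\xi)-\omega(s))$ and using the analytic-semigroup bounds $\|AS(u)\|\le c/u$ and $\|(S(u)-{\rm id})(-A)^{-\gamma}\|\le cu^{\gamma}$, the H{\"o}lder continuity of $\omega$, and the embedding $V_\kappa\hookrightarrow V_\gamma$ (note $\gamma<\kappa$ is forced by $\sum_i\lambda_i^{2\gamma-2\kappa}<\infty$; the gain $\gamma$ over $\beta$ in the increments comes from $E(\omega(\cdot)-\omega(s))$ taking values in $V_\kappa$), one obtains such an estimate. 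Since $\gamma+\beta>1$ and $\omega\in C^\beta([0,T];V)$, the Young integral $\Phi(s,t)E:=\int_s^t\Psi_E(\xi)\otimes_V d\omega(\xi)$ is then well defined and linear in $E$; it remains to show that each $\Phi(s,t)$ is Hilbert--Schmidt, that $\|\Phi(s,t)\|\le c\,|t-s|^{2\beta}$ in the Hilbert--Schmidt norm (note $\Phi(s,s)=0$), and that $(s,t)\mapsto\Phi(s,t)$ is continuous.

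For the quantitative part I would use the orthonormal basis $(E_{ij})$ of $L_2(V,V_\kappa)$ from the proof of Lemma~\ref{stel2} and the basis $(e_i\otimes_V e_l)$ of $V\otimes V$. Since $S(u)e_i=e^{-\lambda_i u}e_i$ and $E_{ij}e_k=\delta_{jk}e_i/\lambda_i^{\kappa}$, a direct computation gives $\Psi_{E_{ij}}(\xi)=-q_j^{1/2}\lambda_i^{-\kappa}\,a_i^j(s,\xi)\,e_i$ with $a_i^j(s,\xi):=\lambda_i\int_s^\xi e^{-\lambda_i(\xi-r)}(\omega_j(r)-\omega_j(s))\,dr$, hence
\begin{equation*}
\Phi(s,t)E_{ij}=-q_j^{1/2}\lambda_i^{-\kappa}\sum_l q_l^{1/2}\Big(\int_s^t a_i^j(s,\xi)\,d\omega_l(\xi)\Big)\,e_i\otimes_V e_l ,
\end{equation*}
the $\int_s^t a_i^j(s,\xi)\,d\omega_l(\xi)$ being scalar Young integrals. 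The heart of the argument is to estimate $a_i^j(s,\cdot)$ pathwise with the correct power of $\lambda_i$. From $a_i^j(s,s)=0$ and the identity $\frac{d}{d\xi}a_i^j(s,\xi)=\lambda_i\big[(\omega_j(\xi)-\omega_j(s))-a_i^j(s,\xi)\big]$ on $[s,T]$ one reads off, on $[s,t]$, the sup bound $\|a_i^j(s,\cdot)\|_\infty\le\|\omega_j\|_\beta(t-s)^\beta$ and the Lipschitz bound with constant $2\lambda_i\|\omega_j\|_\beta(t-s)^\beta$; interpolating between the two yields, for $\gamma\in(0,1]$,
\begin{equation*}
\|a_i^j(s,\cdot)\|_{\gamma,[s,t]}\le c_\gamma\,\lambda_i^{\gamma}\,\|\omega_j\|_\beta\,(t-s)^{\beta}.
\end{equation*}
This interpolation (equivalently, the semigroup smoothing that produces exactly the factor $\lambda_i^{\gamma}$, cancelled in turn by the $\lambda_i^{-\kappa}$ in $E_{ij}$ and by the hypothesis $\sum_i\lambda_i^{2\gamma-2\kappa}<\infty$) is the step I expect to demand the most care.

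With this bound, and since $a_i^j(s,s)=0$ and $\gamma+\beta>1$, Young's estimate \cite{You36} gives $\big|\int_s^t a_i^j(s,\xi)\,d\omega_l(\xi)\big|\le c\,\|a_i^j(s,\cdot)\|_{\gamma,[s,t]}\,\|\omega_l\|_\beta\,(t-s)^{\gamma+\beta}\le c\,\lambda_i^{\gamma}\|\omega_j\|_\beta\|\omega_l\|_\beta(t-s)^{\gamma+2\beta}$, and hence, by the orthonormality of $(e_i\otimes_V e_l)$,
\begin{align*}
\|\Phi(s,t)\|_{L_2(L_2(V,V_\kappa),V\otimes V)}^2&=\sum_{i,j}q_j\lambda_i^{-2\kappa}\sum_l q_l\Big|\int_s^t a_i^j(s,\xi)\,d\omega_l(\xi)\Big|^2\\
&\le c\Big(\sum_i\lambda_i^{2\gamma-2\kappa}\Big)\Big(\sum_j q_j\|\omega_j\|_\beta^2\Big)^2(t-s)^{2\gamma+4\beta}.
\end{align*}
The first factor is finite by hypothesis, and $\sum_j q_j\|\omega_j\|_\beta^2$ is finite on a set of full $\PP$-measure by the bound \eqref{eq20} and the Borel--Cantelli argument already used in Lemma~\ref{stel2} (indeed $\EE\sum_j q_j\|\omega_j\|_\beta^2\le({\rm tr}_V Q)\,\EE\,G_{\beta'}(1,\omega)^2<\infty$). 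Since $2\gamma+4\beta>4\beta$ and $t-s\le T$, this shows $\Phi(s,t)$ is Hilbert--Schmidt with $\|\Phi(s,t)\|_{L_2(L_2(V,V_\kappa),V\otimes V)}\le c\,|t-s|^{2\beta}$. The remaining joint continuity of $(s,t)\mapsto\Phi(s,t)$ in the Hilbert--Schmidt norm, which places it in $C_{2\beta}(\bar\Delta_{0,T};L_2(L_2(V,V_\kappa),V\otimes V))$, follows by repeating the above estimates for the increments obtained by perturbing $s$ or $t$, which is a routine variant once the bound on $a_i^j$ and Young's estimate are in hand.
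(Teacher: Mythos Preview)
Your argument is correct and arrives at exactly the same summability structure $\sum_i\lambda_i^{2\gamma-2\kappa}\cdot\big(\sum_j q_j\|\omega_j\|_\beta^2\big)^2$ as the paper, but the toolkit differs in two places. First, the paper defines and estimates the outer integral via Z\"ahle's fractional derivatives $D_{s+}^\alpha$, $D_{t-}^{1-\alpha}$ with $\alpha\in(1-\beta,\gamma)$, whereas you invoke the Young integral directly; since $\gamma+\beta>1$, the two integrals coincide and the Young remainder bound with $a_i^j(s,s)=0$ gives you the same $(t-s)^{2\beta}$ decay. Second, the paper obtains the $\gamma$-H\"older regularity of $\xi\mapsto\int_s^\xi AS(\xi-r)e_i(\omega_j(r)-\omega_j(s))\,dr$ by quoting Pazy for the pointwise bound and Bensoussan--Frehse for the increment bound (the latter already carries the crucial factor $\lambda_i^{\gamma}$), while you obtain the same factor via the scalar identity $\tfrac{d}{d\xi}a_i^j=\lambda_i\big[(\omega_j(\xi)-\omega_j(s))-a_i^j\big]$ and the elementary interpolation $\|f\|_\gamma\le(2\|f\|_\infty)^{1-\gamma}\|f'\|_\infty^{\gamma}$. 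Your route is more self-contained and even yields a slightly sharper exponent $(t-s)^{\gamma+2\beta}$; the paper's route is aligned with the fractional-calculus framework used elsewhere in \cite{GLSch-local}. As in the paper, the separate continuity of $(s,t)\mapsto\Phi(s,t)$ is not proved within the lemma itself; it is supplied afterwards (Corollary~\ref{stec1}) via the approximation $\omega^n\to\omega$, so your final sentence is at the same level of rigor.
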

\begin{proof}
Thanks to Pazy \cite{Pazy} Theorem 4.3.5 (iii),
\begin{align}\label{neu51}
\begin{split}
  \bigg|A&\int_s^\xi S(\xi-r)E_{ij}(\omega(r)-\omega(s))dr\bigg|
=\frac{q_j^\frac12}{\lambda_i^{\kappa}}\bigg|\int_s^\xi AS(\xi-r)e_i(\omega_j(r)-\omega_j(s))dr\bigg|\\
  &\le c\frac{q_j^\frac12}{\lambda_i^{\kappa}}\|\omega_j\|_{\beta}(\xi-s)^{\beta},
\end{split}
\end{align}
and applying Bensoussan and Frehse \cite{BenFre00} Corollary 2.1 we also have
\begin{align}\label{neu52}
\begin{split}
\bigg|A&\int_s^\xi S(\xi-r)E_{ij}(\omega(r)-\omega(s))dr-A\int_s^{\xi^\prime} S(\xi^\prime-r)E_{ij}(\omega(r)-\omega(s))dr\bigg|\\
  \le & \frac{q_j^\frac12}{\lambda_i^{\kappa-\gamma}}\bigg|\int_s^\xi (-A)^{1-\gamma}S(\xi-r)e_i(\omega_j(r)-\omega_j(s))dr\\
  &\qquad -\int_s^{\xi^\prime} (-A)^{1-\gamma}S(\xi^\prime-r)e_i(\omega_j(r)-\omega_j(s))dr\bigg|\le c\frac{q_j^\frac12}{\lambda_i^{\kappa-\gamma}}\|\omega_j\|_{\beta}|\xi-\xi^\prime|^\gamma.
\end{split}
\end{align}
Therefore, for an $\alpha < \gamma$ such that $\beta >1-\alpha $, we can define the integral
\begin{align*}
& \int_s^t\bigg(A\int_s^\xi S(\xi-r)e_i(\omega_j(r)-\omega_j(s))dr\bigg) d\omega_k(\xi)\\
=&(-1)^\alpha \int_s^tD_{s+}^\alpha\bigg( A \int_s^\cdot S(\cdot-r)e_i(\omega_j(r)-\omega_j(s))dr\bigg)[\xi]D_{t-}^{1-\alpha}(\omega_k)_{t-}[\xi]d\xi.
\end{align*}
For the definition of the so-called fractional derivatives $D_{s+}^\alpha$ and $D_{t-}^{1-\alpha}$ and the definition of the stochastic integral in terms of these expressions we refer to \cite{Zah98}. Note that as a consequence of \eqref{neu51} and \eqref{neu52}, and because $\gamma >\beta$,
\begin{align*}
\bigg|D_{s+}^\alpha\bigg(A \int_s^\cdot S(\cdot-r)e_i(\omega_j(r)-\omega_j(s))dr\bigg)[ \xi]\bigg| &\le c\|\omega_j\|_{\beta}
  (\xi-s)^{\beta-\alpha}.
  \end{align*}
Since $\beta+\alpha>1$, $|D_{t-}^{1-\alpha}(\omega_k)_{t-}[\xi]|\le c\|\omega_k\|_{\beta}(t-\xi)^{\alpha+\beta-1}$, and then
\begin{align*}
  \bigg( & \sum_{i,j,k} \bigg( \int_s^t\bigg(\frac{q_j^\frac12}{\lambda_i^{\kappa}}\int_s^\xi AS(\xi-r)e_i(\omega_j(r)-\omega_j(s))dr\bigg)\otimes_V q_k^\frac12d\omega_k(\xi )\bigg)^2\bigg)^\frac12
  \\
  &\le \bigg(\sum_i\lambda^{2\gamma-2\kappa}_i\bigg)^\frac12 \bigg(\sum_{j,k}q_jq_k\|\omega_j\|_{\beta}^2\|\omega_k\|_{\beta}^2\bigg)^\frac12\int_s^t (\xi-s)^{\beta-\alpha}(t-\xi)^{\alpha+\beta-1} d\xi,
\end{align*}
see \cite{GLSch-local} to find the estimate of the integral in terms of the norms of the fractional derivatives. Now it suffices to take into account that the last integral can be estimated by $c(t-s)^{2\beta}$, which follows from the definition of the Beta function.
\end{proof}
\begin{remark}\label{new}
Replacing in the above proof $\omega$ by $\omega^n-\omega$ we obtain
\begin{align*}
&\lim_{n\to\infty}\int_s^tA\int_s^\xi S(\xi-r)E(\omega^n(r)-\omega^n(s))dr\otimes_V d\omega^n(\xi)\\
=&\int_s^tA\int_s^\xi S(\xi-r)E(\omega(r)-\omega(s))dr\otimes_V d\omega(\xi)
\end{align*}
in $C_{2\beta}(\bar\Delta_{0,T};L_2(L_2(V,V_\kappa),V\otimes V))$.
Indeed, in the proof of Lemma \ref{stel2} we have shown
\begin{equation*}
  \lim_{n\to\infty}\sum_{i,j}q_i q_i(\|\omega_i-\omega_i^n\|_{\beta}^2\|\omega_j\|_{\beta}^2+\|\omega_j-\omega_j^n\|_{\beta}^2\|\omega_i^n\|_{\beta}^2)=0\quad \text{a.s.}\\
\end{equation*}
\end{remark}

In view of Lemma \ref{stel2} and Remark \ref{new}, we conclude with the proof of Theorem \ref{stet1}.

\begin{corollary}\label{stec1}
The mapping $\bar\Delta_{0,T}\ni (s,t)\to (\omega\otimes_S\omega)(s,t)$ is continuous.
\end{corollary}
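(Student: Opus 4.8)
The plan is to read off the continuity of $(\omega\otimes_S\omega)$ from the approximation statement already proved in Theorem \ref{stet1}: on the \emph{bounded} index set $\bar\Delta_{0,T}$, convergence in the $2\beta$--H\"older norm \eqref{steeq102} forces uniform convergence of the fields, and a uniform limit of continuous fields is continuous.

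First I would check that, for each fixed $n\in\NN$, the prelimit
$\bar\Delta_{0,T}\ni(s,t)\mapsto(\omega^n\otimes_S\omega^n)(s,t)\in L_2(L_2(V,V_\kappa),V\otimes V)$
is continuous. This is elementary: $\omega^n$ is piecewise linear, hence $(\omega^n)^\prime$ is bounded on $[0,T]$, so for $E$ in the unit ball of $L_2(V,V_\kappa)$ the Bochner integral $E(\omega^n\otimes_S\omega^n)(s,t)=\int_s^t\int_s^\xi S(\xi-r)E\,d\omega^n(r)\otimes_V d\omega^n(\xi)$ has an integrand bounded uniformly near any given pair, and dominated convergence (using that $S$ is bounded on $[0,T]$) yields continuity of $(s,t)\mapsto E(\omega^n\otimes_S\omega^n)(s,t)$ in $V\otimes V$; since the bounds are uniform over the unit ball of $L_2(V,V_\kappa)$ after summing the convergent series $\sum_i\lambda_i^{-2\kappa}$, this upgrades to continuity in the Hilbert--Schmidt norm of $L_2(L_2(V,V_\kappa),V\otimes V)$.

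Next, on the set of full measure on which Theorem \ref{stet1} gives $((\omega^n\otimes_S\omega^n))_n\to(\omega\otimes_S\omega)$ in $C_{2\beta}(\bar\Delta_{0,T};L_2(L_2(V,V_\kappa),V\otimes V))$, I would use $|t-s|\le T$ on $\bar\Delta_{0,T}$ and the definition \eqref{steeq102} to obtain
\begin{align*}
\sup_{(s,t)\in\bar\Delta_{0,T}}&\|(\omega^n\otimes_S\omega^n)(s,t)-(\omega\otimes_S\omega)(s,t)\|_{L_2(L_2(V,V_\kappa),V\otimes V)}\\
&\qquad\le T^{2\beta}\,\|(\omega^n\otimes_S\omega^n)-(\omega\otimes_S\omega)\|_{2\beta}\to 0,
\end{align*}
so that $(\omega\otimes_S\omega)$ is the uniform limit on $\bar\Delta_{0,T}$ of the continuous fields $(\omega^n\otimes_S\omega^n)$, and therefore continuous (almost surely).

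I do not expect a genuine obstacle here; the only points deserving a little care are that the elementary continuity of the smooth prelimits must be checked in the Hilbert--Schmidt operator norm rather than merely componentwise, and that it is precisely the boundedness of the index set $\bar\Delta_{0,T}$ that lets the $2\beta$--H\"older convergence of Theorem \ref{stet1} be promoted to uniform convergence. If one preferred to avoid the approximations altogether, one could instead argue directly from the decomposition \eqref{lastf}: the second integral is continuous in $(s,t)$ by the estimates \eqref{neu51}--\eqref{neu52} and its fractional--integral representation, and the first integral $E(\omega\otimes\omega)(s,t)$ is continuous by combining its Chen--type additivity with the bound $\|(\omega\otimes\omega)(s,t)\|\lesssim|t-s|^{2\beta}$ from Lemma \ref{stel2} and the $\beta$--H\"older continuity of $\omega$; the approximation route above is, however, shorter.
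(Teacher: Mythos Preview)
Your proposal is correct and follows essentially the same approach as the paper: uniform convergence on the compact set $\bar\Delta_{0,T}$ is extracted from the $2\beta$--H\"older convergence of Theorem~\ref{stet1}, and continuity of the smooth prelimits $(\omega^n\otimes_S\omega^n)$ then yields the conclusion. You are simply more explicit than the paper, which records the uniform convergence and declares the continuity of the prelimits ``straightforward''; your added care about checking continuity in the Hilbert--Schmidt norm (rather than merely componentwise) is a welcome clarification.
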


Indeed, from the above estimate we have the convergence
\begin{equation*}
   \lim_{n\to\infty}\sup_{(s,t)\in\bar\Delta_{0,T}}\|(\omega\otimes_S\omega)(s,t)-(\omega^n\otimes_S\omega^n)(s,t)\|_{L_2(L_2(V,V_\kappa),V\otimes V)}=0
\end{equation*}
and straightforwardly
\begin{equation*}
   \bar\Delta_{0,T}\ni (s,t)\to (\omega^n\otimes_S\omega^n)(s,t)
\end{equation*}
is continuous.

\smallskip

\section{The construction of $(\omega\otimes_S\omega)$ for a  Brownian--motion}\label{stes3}

We consider $\omega$ to be a trace class Brownian--motion with a positive symmetric trace--class covariance operator $Q$. Therefore, in that case $\omega$ is a trace class fractional Brownian--motion
with Hurst--parameter $H=1/2$. We want to formulate a weaker condition for the existence of $(\omega\otimes_S\omega)$
than the assumption of Lemma \ref{stel3}. Again we will take $\hat V=V_\kappa$ and will determine which conditions $\kappa$ must satisfy. \\

Now let us split $(\omega\otimes_S\omega)$ into its components.
Taking into account that $e^{-\lambda_it}$ gives the decomposition of $S(t)$ with respect to the base $(e_i)_{i\in\NN}$, we characterize  $(\omega\otimes_S\omega)$ for $l=i$ by
\begin{align}\label{localeq24}
\begin{split}
 (E_{ij}&(\omega\otimes_S\omega)(s,t),e_l\otimes_Ve_k)_{V\otimes V}=\frac{q_j^\frac12q_k^\frac12}{\lambda_i^\kappa}\int_s^t\int_s^\xi e^{-\lambda_i(\xi-r)}d\omega_j(r)\circ d\omega_k(\xi)\\
  =& \frac{q_j^\frac12q_k^\frac12}{\lambda_i^{\kappa}}\bigg(\int_s^t(\omega_j(\xi)-\omega_j(s))\circ d\omega_k(\xi)\\
  &\qquad -
  \lambda_i\int_s^t\int_s^\xi e^{-\lambda_i(\xi-r)}(\omega_j(r)-\omega_j(s))dr d\omega_k(\xi)\bigg)\\
  = &\frac{q_j^\frac12q_k^\frac12}{\lambda_i^\kappa}\int_s^t\int_s^\xi e^{-\lambda_i(\xi-r)}d\omega_j(r) d\omega_k(\xi)-\frac12\delta_{jk}\frac{q_j^\frac12q_k^\frac12}{\lambda_i^\kappa}(t-s)
\end{split}
\end{align}
and for $l\not=i$ by 0.
Here $\circ$ means Stratonovich-integration where for the inner stochastic integration Ito-- and Stratonovich--integrals are the same.
On the right hand side we have Ito--integration, and the last term there is the Ito--correction which only appears for $j=k$, expressed by the Kronnecker--symbol $\delta_{jk}$.
Note that the second line in (\ref{localeq24}) is obtained by stochastic integration by parts. Let us abbreviate
\begin{equation*}
  \int_s^t\int_s^\xi e^{-\lambda_i(\xi-r)}d\omega_j(r)\circ d\omega_k(\xi)=:(\omega\otimes_S\omega)_{ijk}.
\end{equation*}
In this section, $\omega^n,\,\omega_i^n$ are piecewise linear approximations of the Brownian motions $\omega, \,\omega_i$ with respect to the equidistant partition  $\{t_i^n\}$ of $[0,T]$ of length $2^{-n}T=\delta$.

We now deal with the convergence of $(\omega^n\otimes_S\omega^n)_{ijk}$ to $(\omega\otimes_S\omega)_{ijk}$.
At first we present some lemmata
which will be needed for this purpose.

\smallskip

As a preparatory result for the following we need (see Karatzas and Shreve \cite{KarShr91}, Exercise 3.25, Page 163):

\begin{lemma} \label{stel4} Let $\omega$ denote an one-dimensional Brownian motion and $x$ be a measurable, adapted process satisfying
\begin{align*}
\EE \int_0^T |x(\xi)|^{2p} d\xi < \infty
\end{align*}
for some real numbers $T > 0$ and $p\geq 1$, then
\begin{align*}
\EE \bigg(\int_0^T x(\xi)d\omega(\xi)\bigg)^{2p} \leq (p(2p-1))^p T^{p-1} \EE \int_0^T |x(\xi)|^{2p} d\xi.
\end{align*}
\end{lemma}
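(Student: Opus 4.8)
The plan is to establish this Burkholder--Davis--Gundy type estimate, with its explicit constant, by applying It\^o's formula to $u\mapsto|u|^{2p}$ and then using H\"older's inequality twice, with a localization step to secure the finiteness needed to divide. We may assume $p>1$, since for $p=1$ the claim is the It\^o isometry. Write $M_t:=\int_0^t x(\xi)\,d\omega(\xi)$; the hypothesis $\EE\int_0^T|x(\xi)|^{2p}\,d\xi<\infty$ together with H\"older on $[0,T]$ gives $\EE\int_0^T|x(\xi)|^2\,d\xi<\infty$, so (after passing to a progressively measurable modification of $x$) $M$ is a well-defined continuous square-integrable martingale.

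First I would localize. Set $\tau_N:=\inf\{t\ge 0:|M_t|\ge N\}\wedge T$; by continuity of $M$ we have $|M_{t\wedge\tau_N}|\le N$ for all $t$, and $\tau_N\uparrow T$ almost surely. Applying It\^o's formula to $f(u)=|u|^{2p}$ — which is $C^2(\RR)$ because $2p\ge 2$, with $f''(u)=2p(2p-1)|u|^{2p-2}$ — and using $M_0=0$, $d\langle M\rangle_t=x(t)^2\,dt$, evaluated at $t=T\wedge\tau_N$, gives
\begin{equation*}
|M_{T\wedge\tau_N}|^{2p}=2p\int_0^{T\wedge\tau_N}|M_t|^{2p-2}M_t\,x(t)\,d\omega(t)+p(2p-1)\int_0^{T\wedge\tau_N}|M_t|^{2p-2}x(t)^2\,dt .
\end{equation*}
The integrand of the stochastic integral is dominated by $N^{2p-1}|x(t)|$ on $\{t\le\tau_N\}$, hence square-integrable, so that term has zero expectation and
\begin{equation*}
\EE|M_{T\wedge\tau_N}|^{2p}=p(2p-1)\,\EE\int_0^{T\wedge\tau_N}|M_t|^{2p-2}x(t)^2\,dt\le p(2p-1)\,\EE\int_0^{T}|M_{t\wedge\tau_N}|^{2p-2}x(t)^2\,dt .
\end{equation*}

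Now estimate the right-hand side. By H\"older in $\omega$ with exponents $\tfrac{p}{p-1}$ and $p$ (note $(2p-2)\cdot\tfrac{p}{p-1}=2p$), and since $|M^{\tau_N}|^{2p}$ is a submartingale so that $\EE|M_{t\wedge\tau_N}|^{2p}\le\EE|M_{T\wedge\tau_N}|^{2p}$, followed by H\"older applied to the $dt$-integral together with Tonelli,
\begin{equation*}
\EE|M_{T\wedge\tau_N}|^{2p}\le p(2p-1)\big(\EE|M_{T\wedge\tau_N}|^{2p}\big)^{\frac{p-1}{p}}\,T^{\frac{p-1}{p}}\Big(\EE\int_0^T|x(t)|^{2p}\,dt\Big)^{\frac1p}.
\end{equation*}
Because $\EE|M_{T\wedge\tau_N}|^{2p}\le N^{2p}<\infty$ we may divide by $\big(\EE|M_{T\wedge\tau_N}|^{2p}\big)^{(p-1)/p}$ (the case where this vanishes being trivial) and raise to the $p$-th power, obtaining $\EE|M_{T\wedge\tau_N}|^{2p}\le\big(p(2p-1)\big)^p T^{p-1}\EE\int_0^T|x(t)|^{2p}\,dt$, uniformly in $N$. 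Letting $N\to\infty$ and using Fatou's lemma on the left-hand side (with $M_{T\wedge\tau_N}\to M_T$ a.s.) delivers the claim. The one place demanding care is exactly this limit passage: the localization is what makes the division legitimate, and one must verify, as above, that the resulting bound does not depend on $N$ before invoking Fatou; the rest is bookkeeping.
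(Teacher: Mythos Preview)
Your proof is correct. The paper does not actually supply a proof of this lemma; it simply cites it as Exercise~3.25 on p.~163 of Karatzas and Shreve \cite{KarShr91}. What you have written is precisely the standard solution to that exercise: It\^o's formula applied to $u\mapsto|u|^{2p}$, localization so the stochastic integral has zero mean and the division step is legitimate, the submartingale inequality $\EE|M_{t\wedge\tau_N}|^{2p}\le\EE|M_{T\wedge\tau_N}|^{2p}$, two applications of H\"older (in $\omega$ and in $t$), and Fatou to remove the localization. So your argument agrees with the intended one and fills in the details the paper omits.
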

\begin{lemma}\label{localnew}
For any $p\in\NN$ there exists a $c_p>0$ such that for any $M\in \NN$

\begin{equation*}
  \sum_{k_1+\cdots+k_M=p}\frac{(2p)!}{(2k_1)!(2k_2)!\cdots(2k_M)!}
  \le c_pM^p.
\end{equation*}
Let $(x_i)_{i=1,\cdots M}$ a sequence of independent random variables in $L_{2p}$ where the odd moments are zero.
If in addition $\EE x_i^{2p}\leq c $, $\EE x_i^{0}=1$ for $x_i\not\equiv 0$ then
$$\EE\bigg(\sum_{i=1}^M x_i\bigg)^{2p} \leq c_p c M^p,$$
assuming that all terms containing at least one odd power disappear.
\end{lemma}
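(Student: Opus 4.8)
The plan is to prove the two assertions in turn, reducing the probabilistic moment bound to the combinatorial one, with the multinomial theorem as essentially the only tool.

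For the combinatorial inequality I would compare the ``even'' multinomial coefficient $\frac{(2p)!}{(2k_1)!\cdots(2k_M)!}$ with the ordinary multinomial coefficient $\frac{p!}{k_1!\cdots k_M!}$. The key elementary remark is that $k!\le(2k)!$ for every integer $k\ge0$, so that $\prod_{i=1}^M\frac{k_i!}{(2k_i)!}\le1$ and hence
\begin{equation*}
\frac{(2p)!}{(2k_1)!\cdots(2k_M)!}=\frac{(2p)!}{p!}\cdot\frac{p!}{k_1!\cdots k_M!}\cdot\prod_{i=1}^M\frac{k_i!}{(2k_i)!}\le\frac{(2p)!}{p!}\cdot\frac{p!}{k_1!\cdots k_M!}.
\end{equation*}
Summing over all $(k_1,\dots,k_M)\in\NN^M$ with $k_1+\cdots+k_M=p$ and invoking the multinomial identity $\sum_{k_1+\cdots+k_M=p}\frac{p!}{k_1!\cdots k_M!}=M^p$ gives the first claim with $c_p=\frac{(2p)!}{p!}$.

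For the moment bound I would expand by the multinomial theorem and use the independence of the $x_i$ to factor the expectation:
\begin{equation*}
\EE\Big(\sum_{i=1}^M x_i\Big)^{2p}=\sum_{j_1+\cdots+j_M=2p}\frac{(2p)!}{j_1!\cdots j_M!}\prod_{i=1}^M\EE x_i^{j_i}.
\end{equation*}
Since all odd moments vanish, every summand in which some exponent $j_i$ is odd drops out, so that only the terms with $j_i=2k_i$ and $k_1+\cdots+k_M=p$ survive. For these, Lyapunov's inequality gives $\EE x_i^{2k_i}=\EE|x_i|^{2k_i}\le(\EE|x_i|^{2p})^{k_i/p}\le c^{k_i/p}$ whenever $k_i\ge1$, while $\EE x_i^{0}=1=c^{0}$, so that $\prod_{i=1}^M\EE x_i^{2k_i}\le c^{(k_1+\cdots+k_M)/p}=c$. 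Combining this with the combinatorial bound just proved,
\begin{equation*}
\EE\Big(\sum_{i=1}^M x_i\Big)^{2p}\le c\sum_{k_1+\cdots+k_M=p}\frac{(2p)!}{(2k_1)!\cdots(2k_M)!}\le c\,c_p M^p,
\end{equation*}
which is the assertion.

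I do not expect a genuine obstacle; both parts are routine bookkeeping with the multinomial expansion. The one point that wants some care is keeping the \emph{same} constant $c$ --- rather than a power $c^p$ --- in the estimate $\prod_i\EE x_i^{2k_i}\le c$: this is precisely where one must interpolate each lower even moment $\EE x_i^{2k_i}$ against the common top moment $\EE x_i^{2p}\le c$ by Lyapunov's inequality, after which the exponents $k_i/p$, summing to $1$, collapse the product back to $c$. A subsidiary nuisance is the bookkeeping around any $x_i\equiv0$ and the reading of $\EE x_i^{0}$, but this is harmless, since appending null summands leaves $\sum_i x_i$ unchanged while only enlarging $M$; hence one may assume all $x_i\not\equiv0$ from the outset.
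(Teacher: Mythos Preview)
Your proof is correct and follows essentially the same route as the paper: expand by the multinomial theorem, drop odd-exponent terms by the hypothesis on odd moments, control each surviving product of moments via the Lyapunov/H\"older interpolation $\EE x_i^{2k_i}\le(\EE x_i^{2p})^{k_i/p}$, and then invoke the combinatorial bound. The only genuine difference is in how you establish the combinatorial inequality: the paper bounds the number of compositions $(k_1,\dots,k_M)$ of $p$ by $M^p$ and bounds each multinomial coefficient separately by $(2p)!$, whereas you compare $\frac{(2p)!}{(2k_1)!\cdots(2k_M)!}$ termwise with $\frac{(2p)!}{p!}\cdot\frac{p!}{k_1!\cdots k_M!}$ via $k!\le(2k)!$ and then sum using the multinomial identity. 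Your route is slightly cleaner and yields the explicit constant $c_p=(2p)!/p!$ instead of $(2p)!$, but both arguments are elementary and equally short.
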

\begin{proof}
We have
\begin{equation*}
   \sum_{k_1+\cdots+k_M=p}1\leq M^p.
\end{equation*}
On the other hand
\begin{equation*}
\sup_{k_1+\cdots+k_M=p}\frac{(2p)!}{(2k_1)!(2k_2)!\cdots(2k_M)!}\le c_p\le (2p)!.
\end{equation*}
Hence a bound for this expression can be chosen independently of $M$.

To conclude the proof it suffices to apply the multinomial theorem, which reduces to the following situation since the terms containing odd powers are neglected:
\begin{align*}
\EE\bigg(\sum_{i=1}^M x_i\bigg)^{2p} &=\sum_{a_1+\cdots+a_M=2p}  \binom{2p}{a_1,\cdots,a_M}\EE x_1^{a_1} \cdots \EE x_M^{a_M}\\
&=\sum_{k_1+\cdots+k_M=p}  \binom{2p}{2k_1,\cdots,2k_M}\EE x_1^{2k_1} \cdots \EE x_M^{2k_M}\\
&\le \sum_{k_1+\cdots+k_M=p}  \binom{2p}{2k_1,\cdots,2k_M}\prod_{m=1,\cdots, M,k_m>0}(\EE x_m^{2p})^\frac{2k_m}{2p}\\
 &\leq c_p c M^p.
\end{align*}
If some of the $k_m=0$ then $\EE x_m^0=1$ and the corresponding terms would be removed from the above product.
\end{proof}

\begin{lemma}\label{stel5}
For every $T>0$, $p\in\NN$ and a sufficiently small $\eps>0$ there exists  $c_p>0$  such that for $(s,t)\in\bar\Delta_{0,T}$, $i,j,k,n\in\NN$
\begin{align}\label{localeq27}
\begin{split}
 & \EE\big|(\omega\otimes_S\omega)_{ijk}-(\omega^n\otimes_S\omega^n)_{ijk} \big|^{2p}\le c_p \delta^\eps\lambda_i(t-s)^{2p-\eps}.
  \end{split}
\end{align}
\end{lemma}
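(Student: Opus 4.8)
The plan is to estimate the $2p$-th moment of the difference $(\omega\otimes_S\omega)_{ijk}-(\omega^n\otimes_S\omega^n)_{ijk}$ by splitting the double stochastic integral into an ``outer'' and an ``inner'' part, just as in the second line of \eqref{localeq24}, and to control each part separately. Writing $\Delta_{ijk}^n(s,t):=(\omega\otimes_S\omega)_{ijk}-(\omega^n\otimes_S\omega^n)_{ijk}$, I would first reduce the Stratonovich objects to It\^o integrals (the It\^o correction is $-\tfrac12\delta_{jk}(t-s)$ for the true process and $-\tfrac12\delta_{jk}(t-s)$ plus an $O(\delta)$ error for the piecewise-linear one, so the corrections cancel up to a harmless $O(\delta)$ term), and then treat
\begin{equation*}
  \Delta_{ijk}^n(s,t)=\int_s^t\int_s^\xi e^{-\lambda_i(\xi-r)}\,d\omega_j(r)\,d\omega_k(\xi)-\int_s^t\int_s^\xi e^{-\lambda_i(\xi-r)}\,d\omega_j^n(r)\,d\omega_k^n(\xi)
\end{equation*}
(for $j\ne k$; the case $j=k$ is handled separately exactly as in Lemma \ref{stel2}, producing only a square of a one-dimensional increment difference, which is lower order). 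I would insert $\pm$ the mixed term $\int_s^t\int_s^\xi e^{-\lambda_i(\xi-r)}\,d\omega_j^n(r)\,d\omega_k(\xi)$ so that the difference becomes $I_1+I_2$, where $I_1$ keeps the outer integrator as $d\omega_k$ and replaces the inner $d\omega_j$ by $d\omega_j^n$, and $I_2$ keeps the inner integrator as $d\omega_j^n$ and replaces the outer $d\omega_k$ by $d\omega_k-d\omega_k^n$.

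For $I_1$, the integrand $y(\xi):=\int_s^\xi e^{-\lambda_i(\xi-r)}\,d(\omega_j-\omega_j^n)(r)$ is adapted, and I would apply Lemma \ref{stel4} (with $p$) to get $\EE I_1^{2p}\le (p(2p-1))^p(t-s)^{p-1}\int_s^t\EE|y(\xi)|^{2p}d\xi$. To bound $\EE|y(\xi)|^{2p}$ I would again use Lemma \ref{stel4} for the inner integral, after integrating by parts to write $y(\xi)=(\omega_j-\omega_j^n)(\xi)-\lambda_i\int_s^\xi e^{-\lambda_i(\xi-r)}(\omega_j-\omega_j^n)(r)\,dr$; since $\EE|(\omega_j-\omega_j^n)(r)|^{2p}\le c_p\,\delta^{p}$ (the piecewise-linear approximation on a mesh of size $\delta$ deviates from Brownian motion by at most $O(\sqrt\delta)$ in $L^{2p}$, locally), and $\lambda_i\int_s^\xi e^{-\lambda_i(\xi-r)}dr\le 1$, one gets $\EE|y(\xi)|^{2p}\le c_p\,\delta^p$. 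Chaining, $\EE I_1^{2p}\le c_p\,\delta^p\,(t-s)^{p}$, which is much stronger than needed; interpolating this with the trivial bound and absorbing powers of $\lambda_i$ where e.g. $\int_s^\xi e^{-\lambda_i(\xi-r)}dr\le \lambda_i^{-1}$ trades a factor $\delta^p(t-s)^p$ for $\delta^\eps(t-s)^{2p-\eps}$ with a constant involving at most one power of $\lambda_i$. (The single power $\lambda_i$ on the right of \eqref{localeq27} is what ultimately must be matched against the summability hypothesis $\sum_i\lambda_i^{2\gamma-2\kappa}<\infty$ later.) For $I_2$, the integrand is $z(\xi):=\int_s^\xi e^{-\lambda_i(\xi-r)}\,d\omega_j^n(r)$, which is smooth in $\xi$ but the integrator is the \emph{difference} $\omega_k-\omega_k^n$; here I would integrate by parts in $\xi$ to move the derivative onto $z$, turning $I_2$ into $(\omega_k-\omega_k^n)(t)z(t)-\int_s^t(\omega_k-\omega_k^n)(\xi)z'(\xi)d\xi$, and then estimate each factor by the same Brownian-increment moment bounds, using $z'(\xi)=\omega_j^{n\prime}(\xi)-\lambda_i z(\xi)$ and the fact that $\EE|\omega_j^{n\prime}(\xi)|^{2p}\le c_p\delta^{-p}$ on each mesh interval, while the companion factor $(\omega_k-\omega_k^n)(\xi)$ contributes $\delta^{p}$, so the $\delta$-powers again balance.

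The technical heart — and the step I expect to be the main obstacle — is bookkeeping the joint dependence on $\delta$, on $(t-s)$, and on $\lambda_i$ so that exactly the exponents in \eqref{localeq27} come out: one needs $\delta^\eps$ (any small positive power of the mesh, to drive Borel--Cantelli later), the near-maximal time power $(t-s)^{2p-\eps}$ (so that after dividing by $|t-s|^{4\beta p+2}$ in a Kolmogorov-type criterion the exponent is still integrable for $\beta=1/2$), and only a \emph{single} power of $\lambda_i$. Getting the $\lambda_i$-power down to $1$ requires using $e^{-\lambda_i(\xi-r)}$ efficiently: on the ``diagonal'' region $\xi-r\lesssim\lambda_i^{-1}$ one uses boundedness of the kernel, and on the complement one uses its exponential smallness, which is where the extra $\lambda_i$'s from differentiating $z$ get reabsorbed. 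I would also need the elementary moment estimates for $\omega-\omega^n$ and for the piecewise-linear derivative $\omega^{n\prime}$, which follow from Gaussianity and a scaling argument on a single mesh interval (and are standard; cf.\ the estimates already invoked in \eqref{eq20} and Lemma \ref{stel4}). Finally, the case $j=k$ is disposed of as at the end of the proof of Lemma \ref{stel2}: there $(\omega\otimes_S\omega)_{ijj}-(\omega^n\otimes_S\omega^n)_{ijj}$ reduces to a combination of $\tfrac12\big((\omega_j(t)-\omega_j(s))^2-(\omega_j^n(t)-\omega_j^n(s))^2\big)$ and an inner integral of the same type already estimated, giving a bound of the required form with no power of $\lambda_i$ at all, hence a fortiori \eqref{localeq27}.
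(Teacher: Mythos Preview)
Your decomposition $\Delta_{ijk}^n=I_1+I_2$ via a mixed term is a natural strategy, and the $I_1$ part is essentially fine (modulo the adaptedness issue that $\omega_j^n(r)$ looks ahead to the next partition point, which for $j\neq k$ you can circumvent by conditioning on the whole path of $\omega_j$). The real gap is in $I_2$. After integrating by parts you write $z'(\xi)=\omega_j^{n\prime}(\xi)-\lambda_i z(\xi)$ and argue that $\EE|\omega_j^{n\prime}(\xi)|^{2p}\le c_p\delta^{-p}$ while $\EE|(\omega_k-\omega_k^n)(\xi)|^{2p}\le c_p\delta^{p}$, ``so the $\delta$-powers again balance.'' But balancing to $\delta^0$ is exactly the problem: a pointwise H{\"o}lder/Cauchy--Schwarz argument on $\int_s^t(\omega_k-\omega_k^n)(\xi)\omega_j^{n\prime}(\xi)\,d\xi$ yields only $c_p(t-s)^{2p}$, which is \emph{larger} than $\delta^\eps(t-s)^{2p-\eps}$ when $\delta\le t-s$ and therefore does not give \eqref{localeq27}. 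To extract a positive power of $\delta$ from this term you must decompose it as a sum over partition intervals of independent mean-zero pieces and invoke a multinomial bound of the type in Lemma~\ref{localnew}; this is precisely the mechanism driving the paper's proof, and it is absent from your sketch. The second piece $\lambda_i\int_s^t(\omega_k-\omega_k^n)(\xi)z(\xi)\,d\xi$ is also not addressed: a naive bound produces $\lambda_i^{2p}$ after taking $2p$-th moments, far more than the single $\lambda_i$ allowed.

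Your handling of $j=k$ is also off. You claim it ``reduces to'' the square-of-increments difference of Lemma~\ref{stel2} plus a term ``with no power of $\lambda_i$ at all.'' In fact the decomposition \eqref{localeq24} leaves a genuine remainder $\lambda_i\int_s^t\int_s^\xi e^{-\lambda_i(\xi-r)}(\omega_k(r)-\omega_k(s))\,dr\,d\omega_k(\xi)$, and it is exactly this term that produces the factor $\lambda_i$ in \eqref{localeq27}; the paper treats $j=k$ as the \emph{main} case, not the easy one. The paper's route is quite different from yours: rather than inserting a mixed term, it partitions the simplex $\bar\Delta_{s,t}$ into small triangles $\bar\Delta_{t_m^n,t_{m+1}^n}$ along the diagonal and rectangles $[t_{m'-1}^n,t_{m'}^n]\times[t_m^n,t_{m+1}^n]$, computes the difference explicitly on each piece (where $\omega^n$ is linear), obtains a bound of order $\lambda_i\delta^{2p+1}$ per piece, and then controls the sum of $O((t-s)/\delta)$ or $O((t-s)^2/\delta^2)$ independent mean-zero pieces via Lemma~\ref{localnew}. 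That combinatorial independence argument is the crucial ingredient your plan lacks.
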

\begin{proof}
We here only study the case $j=k$ since the case $j\not=k$ can be studied similarly, see also Friz and Hairer  \cite{FriHai14} Page 33f..
For the following we assume that $\lambda_1\ge 1$ without lost of generality. For a partition interval
$[t_{m-1}^n,t_m^n)$ we use the notation
\begin{equation*}
\Delta_k^{\delta}(m)=\omega_k(t_m^n)-\omega_k(t_{m-1}^n)
\end{equation*}
and when $s<t\in [t_{m-1}^n,t_m^n)$ we will also use the notation
\begin{equation*}
  \Delta_k^{t-s}(m)=\omega_k(t)-\omega_k(s).
\end{equation*}
We divide the proof in several cases:

(i) We consider at first the case that $s,\,t\in [t_{m-1}^n,t_m^n]$. In that situation, the difference of double integrals we want to estimate is given for any $i$ by
\begin{align}\label{localeq22}
\begin{split}
& \frac{\Delta_k^\delta(m)^2}{\delta^2\lambda_i^2}(e^{-\lambda_i(t-s)}+\lambda_i(t-s)-1)
  -\frac{\Delta^{t-s}_k(m)^2}{2}\\
  &\quad +\lambda_i\int_s^t\int_s^\xi e^{-\lambda_i(\xi-r)}(\omega_k(r)-\omega_k({s}))dr d\omega_k(\xi)
  \end{split}
\end{align}
which follows by the second part of \eqref{localeq24}, and where the first expression corresponds to the integral with respect to the piecewise linear
approximated  Brownian--motions.
The first two expressions of \eqref{localeq22} can be estimated by
\begin{align}\label{localeq26}
\begin{split}
 \bigg|\frac{\Delta_k^\delta(m)^2}{\delta^2\lambda_i^2}&(e^{-\lambda_i(t-s)}+\lambda_i(t-s)-1)-\frac12\Delta_k^{t-s}(m)^2\bigg|\\
 &\le \Delta_k^{t-s}(m)^2\bigg|\frac12-\frac{e^{-\lambda_i(t-s)}+\lambda_i(t-s)-1}{(t-s)^2\lambda_i^2}\bigg|\\
 &+\bigg(\frac{\Delta_k^\delta(m)^2(t-s)^2}{\delta^2}-\Delta_k^{t-s}(m)^2\bigg)\frac{e^{-\lambda_i(t-s)}+\lambda_i(t-s)-1}{(t-s)^2\lambda_i^2}.
\end{split}
\end{align}
Note that the function
\begin{equation*}
 \RR^+\ni x\mapsto \frac12-\frac{e^{-x}+x-1}{x^2}=O(x)\quad\text{for }x\to 0^+
\end{equation*}
defined by  zero at zero, is increasing and bounded by $1/2$, having derivatives bounded by 1 for $x>0$ and
\begin{equation*}
 \RR^+\ni x\mapsto \frac{e^{-x}+x-1}{x^2}\in [0,\frac12]
\end{equation*}
given by $1/2$ at zero. Hence
\begin{equation*}
  \bigg(\frac12 -\frac{e^{-\lambda_i(t-s)}+\lambda_i(t-s)-1}{(t-s)^2\lambda_i^2}\bigg)^{2p}\le \frac12-\frac{e^{-\lambda_i(t-s)}+\lambda_i(t-s)-1}{(t-s)^2\lambda_i^2}\le \lambda_i(t-s).
\end{equation*}
Since $t-s<\delta$, the $2p$-moment of the first expression on the right hand side of \eqref{localeq26} can be estimated by $\tilde c_p^1(t-s)^{2p}\lambda_i\delta$ and the second one by $\hat c_p^1(t-s)^{2p-\eps}\delta^\eps$ for a sufficiently small $\eps$
and for appropriate constants $\tilde c_p^1,\,\hat c_p^1$. Considering the last expression of \eqref{localeq22}, by H{\"o}lder's inequality for the inner integral we obtain
\begin{align*}
\begin{split}
  \EE&\bigg(\int_{s}^\xi e^{-\lambda_i(\xi-r)}(\omega_k(r)-\omega_k(s))dr\bigg)^{2p}\\
  &\le\bigg(\int_{s}^\xi e^{\frac{-2p}{2p-1}\lambda_i(\xi-r)}dr\bigg)^{2p-1}
  \int_{s}^\xi \EE(\omega_k(r)-\omega_k(s))^{2p}dr\\
  &\le \bar c^{1}_p\bigg(\frac{2p-1}{2p}\bigg)^{2p-1}\frac{1}{\lambda_i^{2p-1}}(1-e^{\frac{-2p}{2p-1}\lambda_i(\xi-s)})^{2p-1}(\xi-s)^{p+1}.
\end{split}
\end{align*}
Applying Lemma \ref{stel4} we get
\begin{equation*}
 \EE\bigg(\lambda_i\int_s^t\int_{s}^\xi e^{-\lambda_i(\xi-r)}(\omega_k(r)-\omega_k(s))dr d\omega_k(\xi)\bigg)^{2p}\le  \tilde c_p^1(t-s)^{2p}\delta\lambda_i.
\end{equation*}
Note that all the constants depending on $p$ that have appeared can be chosen independently of $i,\,k$.\\

(ii) If $ s,\,t$ are in two neighbored intervals, say, for instance, $t_{m-1}^n\le s< t_m^n\le t \le t_{m+1}^n$, we can estimate the expression in \eqref{localeq27} in a similar way than before, just dividing the region of integration into two triangles $\bar\Delta_{s,t_{m}^n}$, $\bar\Delta_{t_m^n,t}$ and a rectangle $[t_m^n,t]\times [s ,t_m^n]$. The integrals with respect to the triangles can be estimated as in the step (i) while the estimates with respect to the rectangle are considered  below.\\

(iii) Let us now assume in general $t-s> \delta$. In addition suppose that $0\le t_{m_0-1}^n\le s<t_{m_0}^n<$ $t_{m_1}^n< t\le t_{m_1+1}^n\le T$. We begin considering the integrals, denoted by $I_m^{\Delta,\delta,i,k,k}$, of \eqref{localeq27} for $j=k$  and over any of the triangles $\bar\Delta_{t_{m}^n,t_{m+1}^n}$ along the hypothenuse of $\bar\Delta_{0,T}$.  Then following the step (i) of the proof, since in the same triangle $\bar\Delta_{t_{m}^n,t_{m+1}^n}$ the distance is just given by $\delta$, the corresponding estimate of  \eqref{localeq27} is bounded by $\tilde c_p^1\lambda_i\delta^{2p+1}$ (the second term of (\ref{localeq26}) cancels out in this case).

To estimate the $2p$--moment of the sum of all these integrals we apply H{\"o}lder's inequality and Lemma \ref{localnew}. Since the number of these rectangles is of order $(t-s)/\delta$, we get
\begin{equation*}
  \EE\bigg(\sum_m I_{m}^{\Delta,\delta,i,k,k}\bigg)^{2p}=O\bigg(\bigg(\frac{t-s}{\delta}\bigg)^{2p-1}
  \frac{t-s}{\delta}\bigg)\delta^{2p+1}\lambda_i\quad\text{for }\delta\to 0^+.
\end{equation*}
Hence taking also the triangles $\bar\Delta_{s,t_{m_0}^n}$, $\bar\Delta_{t_{m_{1}}^n,t}$ into account there exists an $c_p$ such that for any $\delta\ge 1$ and $t_{m_0}-s\le \delta$ and $t-t_{m_1}\leq \delta$,
\begin{equation*}
   \EE\bigg(\sum_m I_{m}^{\Delta,\delta,i,k,k}\bigg)^{2p}\le c_p\lambda_i(t-s)^{2p-\eps}\delta^\eps.\\
\end{equation*}

(iv)
Consider now the integrals over one of the rectangles $[t_{m^\prime-1}^n,t_{m^\prime}^n]\times[t_{m}^n,t_{m+1}^n]$  such that $m\ge m^\prime$.
First we note that
\begin{align*}
  &I_{m^\prime-1,m}^{R,\delta,i,k,k}:=\int_{m\delta}^{(m+1)\delta}\int_{(m^\prime-1)\delta}^{m^\prime\delta}e^{-\lambda_i(\xi-r)}d\omega_k(r)d\omega_k(\xi)\\
 & \qquad  \qquad  -\int_{m\delta}^{(m+1)\delta}\int_{(m^\prime-1)\delta}^{m^\prime\delta}e^{-\lambda_i(\xi-r)}d\omega_k^n(r)d\omega^n_k(\xi)\\
  &=_{distr}e^{-\lambda_i\delta(m-m^\prime)}
  \int_{m\delta}^{(m+1)\delta}\int_{(m-1)\delta}^{m\delta}\big(e^{-\lambda_i(\xi-r)}-\frac{(1-e^{-\lambda_i\delta})^2}{\delta^2\lambda_i^2}\big)d\omega_k(r)d\omega_k(\xi)\\
  &=e^{-\lambda_i\delta(m-m^\prime)}I_{m-1,m}^{R,\delta,i,k,k}
\end{align*}
where the factor in front of the integral is less than 1.
Considering the $2p$--moment of these integrals, thanks to Lemma \ref{stel4} and H\"older's inequality, we have
\begin{align*}
  \EE\big(I_{m-1, m}^{R,\delta,i,k,k}\big)^{2p}&\le  c_p \delta\lambda_i\frac{1}{(\delta\lambda_i)}\delta^{2p-2}\int_{m\delta}^{(m+1)\delta}\int_{(m-1)\delta}^{m\delta}
  \big(e^{-\lambda_i(\xi-r)}-\frac{(1-e^{-\lambda_i\delta})^2}{\delta^2\lambda_i^2}\big)^{2p}drd\xi\\
  &\le c_p\delta\lambda_i\delta^{2p}\int_{0}^1\int_{-1}^0
  \big(e^{-\lambda_i\delta(\xi-r)}-\frac{(1-e^{-\lambda_i\delta})^2}{\delta^2\lambda_i^2}\big)^{2p}/(\delta\lambda_i) drd\xi\\
  &\le c_p\delta\lambda_i\delta^{2p}\int_{0}^1\int_{-1}^0\sup_{x>0,y\in [0,2]}
  \big(e^{-xy}-\frac{(1-e^{-x})^2}{x^2}\big)^{2p}/x drd\xi.
\end{align*}
The above supremum is finite. Hence it is easily seen that this integrand is bounded independently of $y$ for $x\to+\infty$.  Consider now the integral in \eqref{localeq27} over the union of squares inside $\bar\Delta_{0,T}$. Expanding
\begin{equation*}
  \EE(\sum_{m^\prime\le m}I_{m^\prime, m}^{R,\delta,i,k,k})^{2p}
\end{equation*}
we can cancel all terms containing an odd power by the independence of all these integrals, such that the number of terms in this multinomial is of order
$((t-s)/\delta)^{2p}$  by Lemma \ref{localnew}. Hence
\begin{equation*}
  \EE(\sum_{m^\prime\le m}I_{m-1, m}^{R,\delta,i,k,k})^{2p}=O\bigg(\frac{(t-s)^{2p}}{\delta^{2p}}\bigg)\lambda_i\delta^{2p+1}=\lambda_iO((t-s)^{2p})\delta,\quad\delta\to 0^+
\end{equation*}
for $\delta\to 0^+$. Therefore, for any $p,\,i,\,j,\,k$ there exists a $c_p$ such that
\begin{equation*}
  \EE(\sum_{m^\prime\le m}I_{m-1, m}^{R,\delta,i,k,k})^{2p}\le c_p\lambda_i\delta(t-s)^{2p}.
\end{equation*}

(v) In a similar manner we can consider the rectangles along the cathetus  of $\bar\Delta_{0,T}$. We omit the calculations.

\end{proof}

\begin{lemma}\label{locall110}
Suppose that for a $\nu>0$ and $p\in\NN$ we have
\begin{equation*}
  \sum_i\lambda_i^{\frac{-\nu p}{p-1}}<\infty \quad\text{and  }\sum_i\lambda_i^{p(\nu-2\kappa)+1}<\infty.
\end{equation*}
Then
there exists a $c_p>0$ such that for all $(s,t)\in \bar\Delta_{0,T}$
\begin{align*}
\EE\|(\omega\otimes_S\omega)(s,t)\|_{L_2(L_2(V,V_\kappa),V\otimes V)}^{2p}& \le c_p(t-s)^{2p}.
\end{align*}
In addition the random field $(\omega \otimes_S\omega)$ has a continuous version.
\end{lemma}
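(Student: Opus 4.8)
The plan is to reduce the estimate to one scalar moment bound on the components $(\omega\otimes_S\omega)_{ijk}$ and then to sum in two stages, so that the two summability hypotheses enter one at a time. First I would expand the norm componentwise in the orthonormal bases $(E_{ij})$ of $L_2(V,V_\kappa)$ and $(e_l\otimes_V e_k)$ of $V\otimes V$. By \eqref{localeq24} the component indexed by $(i,j,l,k)$ vanishes unless $l=i$, and for $l=i$ it equals $\lambda_i^{-\kappa}q_j^{1/2}q_k^{1/2}(\omega\otimes_S\omega)_{ijk}(s,t)$, so that
\begin{equation*}
\|(\omega\otimes_S\omega)(s,t)\|_{L_2(L_2(V,V_\kappa),V\otimes V)}^2=\sum_{i,j,k}\frac{q_jq_k}{\lambda_i^{2\kappa}}\big|(\omega\otimes_S\omega)_{ijk}(s,t)\big|^2 .
\end{equation*}

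The core step is the pointwise moment bound, uniform in $i,j,k$: there is $c_p>0$ such that for all $(s,t)\in\bar\Delta_{0,T}$,
\begin{equation*}
\EE\big|(\omega\otimes_S\omega)_{ijk}(s,t)\big|^{2p}\le c_p\,\lambda_i\,(t-s)^{2p}.
\end{equation*}
I would obtain this from the second representation in \eqref{localeq24}, that is from $(\omega\otimes_S\omega)_{ijk}=\int_s^t(\omega_j(\xi)-\omega_j(s))\circ d\omega_k(\xi)-\lambda_i\int_s^t\big(\int_s^\xi e^{-\lambda_i(\xi-r)}(\omega_j(r)-\omega_j(s))dr\big)d\omega_k(\xi)$. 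For the first term the Stratonovich and It\^o integrals differ only by $\frac12\delta_{jk}(t-s)$, and Lemma \ref{stel4} together with $\EE|\omega_j(\xi)-\omega_j(s)|^{2p}=(2p-1)!!(\xi-s)^p$ bounds it by $c_p(t-s)^{2p}$. For the second term, a H\"older estimate in $r$ with conjugate exponents $\frac{2p}{2p-1}$ and $2p$ — exactly as in the proof of Lemma \ref{stel5} — extracts a factor $\lambda_i^{-(2p-1)}$ from $\int_s^\xi e^{-\frac{2p}{2p-1}\lambda_i(\xi-r)}dr$, and Lemma \ref{stel4} applied to the outer integral then gives $c_p\lambda_i^{-(2p-1)}(t-s)^{2p+1}$; multiplying by $\lambda_i^{2p}$ and using $t-s\le T$ leaves $c_pT\lambda_i(t-s)^{2p}$. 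Adding the two contributions and using $\lambda_i\ge\lambda_1\ge1$ (which may be assumed after rescaling, as in Lemma \ref{stel5}) gives the claim. I expect this to be the main obstacle: the $\lambda_i^{2p}$ coming from raising the prefactor $\lambda_i$ of \eqref{localeq24} to the power $2p$ must be damped down to the single power $\lambda_i$ by the smoothing of $e^{-\lambda_i(\cdot)}$, with a constant independent of $i,j,k$, while the sharp power $(t-s)^{2p}$ is kept (the surplus $(t-s)$ being absorbed into $T$); a weaker power of $\lambda_i$ or of $t-s$ would destroy the matching with the summability conditions.

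It then remains to sum. Taking $\nu>0$ as in the statement, I split $\lambda_i^{-2\kappa}=\lambda_i^{-\nu}\cdot\lambda_i^{\nu-2\kappa}$ and apply H\"older's inequality in the index $i$ with exponents $\frac{p}{p-1}$ and $p$:
\begin{equation*}
\Big(\sum_i\frac{X_i}{\lambda_i^{2\kappa}}\Big)^{p}\le\Big(\sum_i\lambda_i^{-\frac{\nu p}{p-1}}\Big)^{p-1}\sum_i\lambda_i^{p(\nu-2\kappa)}X_i^{p},\qquad X_i:=\sum_{j,k}q_jq_k\big|(\omega\otimes_S\omega)_{ijk}(s,t)\big|^2 .
\end{equation*}
Taking expectations, bounding $\EE X_i^{p}$ by Minkowski's inequality in $L_p(\Omega)$ together with the pointwise bound and $\sum_jq_j={\rm tr}_V Q<\infty$ (whence $\EE X_i^{p}\le c_p({\rm tr}_V Q)^{2p}\lambda_i(t-s)^{2p}$), one is left with
\begin{equation*}
\EE\|(\omega\otimes_S\omega)(s,t)\|_{L_2(L_2(V,V_\kappa),V\otimes V)}^{2p}\le c_p(t-s)^{2p}\Big(\sum_i\lambda_i^{-\frac{\nu p}{p-1}}\Big)^{p-1}\sum_i\lambda_i^{p(\nu-2\kappa)+1},
\end{equation*}
and both series are finite exactly by the two hypotheses.

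For the continuous version I would run the same computation with $(\omega\otimes_S\omega)_{ijk}$ replaced by $(\omega\otimes_S\omega)_{ijk}-(\omega^n\otimes_S\omega^n)_{ijk}$, now using the estimate $\EE|(\omega\otimes_S\omega)_{ijk}-(\omega^n\otimes_S\omega^n)_{ijk}|^{2p}\le c_p\delta^\eps\lambda_i(t-s)^{2p-\eps}$ of Lemma \ref{stel5}; the identical two--stage summation (the same two series reappear) yields $\EE\|(\omega\otimes_S\omega)(s,t)-(\omega^n\otimes_S\omega^n)(s,t)\|^{2p}\le c_p\delta^\eps(t-s)^{2p-\eps}$. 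A Garsia--Rodemich--Rumsey / Besov--type argument in the variable $(s,t)\in\bar\Delta_{0,T}$ (as applied to the differences in the proof of Lemma \ref{stel2}) then upgrades this to almost sure uniform convergence of a subsequence of the continuous fields $(\omega^n\otimes_S\omega^n)$, so that $(\omega\otimes_S\omega)$ has a continuous version; alternatively one estimates the rectangular increments of $(\omega\otimes_S\omega)$ directly by the same It\^o/H\"older method and invokes the Kolmogorov continuity theorem on $\bar\Delta_{0,T}$.
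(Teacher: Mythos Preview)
Your moment bound is correct and essentially parallel to the paper's, with one noteworthy difference in the representation you use. You work with the middle line of \eqref{localeq24} (the integration--by--parts form) and end up with $\EE|(\omega\otimes_S\omega)_{ijk}|^{2p}\le c_p\lambda_i(t-s)^{2p}$; the paper instead uses the last line of \eqref{localeq24} (the direct It\^o double integral minus the correction), observes that the inner integral $\int_s^\xi e^{-\lambda_i(\xi-r)}d\omega_j(r)$ is Gaussian with $2p$--th moment $\le c_p(\xi-s)^p$, and from Lemma~\ref{stel4} obtains the cleaner bound $c_p(t-s)^{2p}$ \emph{without} the factor $\lambda_i$. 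Your extra $\lambda_i$ is harmless because the second summability hypothesis carries the exponent $p(\nu-2\kappa)+1$; in the paper's version the ``$+1$'' is actually not used here (only in Lemma~\ref{locall11}). Your H\"older--in--$i$ plus Minkowski--in--$j,k$ summation is a mild variant of the paper's single H\"older step over all indices; both give the same final estimate.

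For the continuous version your route is genuinely different from the paper's, and one step needs care. The paper does \emph{not} argue via the approximations at this stage: it realises $E(\omega\otimes_S\omega)(0,\cdot)$ as a bona fide Hilbert--space--valued It\^o integral (hence with a continuous version in $t$), and then uses the decomposition \eqref{locsteq1} together with a fractional--calculus estimate from \cite{GLSch-local} on the cross term $\int_s^t\int_0^s S(\xi-r)\cdot d\omega(r)\otimes_V d\omega(\xi)$ to transfer continuity to the two--parameter field. Your plan is to deduce continuity from almost sure uniform convergence of the continuous approximants $(\omega^n\otimes_S\omega^n)$ via a GRR argument. That is viable, but you cannot apply the GRR lemma of \cite{DeNeTi10} directly to $(\omega\otimes_S\omega)-(\omega^n\otimes_S\omega^n)$, since that lemma presupposes continuity of the field in question --- precisely what you are trying to establish (this is also why the paper's Lemma~\ref{localt2} explicitly invokes the continuity proved here before using GRR). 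The clean fix is to apply GRR to the \emph{continuous} differences $(\omega^n\otimes_S\omega^n)-(\omega^m\otimes_S\omega^m)$: the $L^{2p}$ bound you state, combined with the triangle inequality, controls these, and together with the Chen--type correction term (which for smooth $\omega^n,\omega^m$ is easily estimated as in \eqref{Somegac}) yields almost sure Cauchyness in $C_{2\beta}$ along the dyadic sequence; the limit is then the desired continuous version, identified pointwise with $(\omega\otimes_S\omega)(s,t)$ via the $L^{2p}$ convergence. Your Kolmogorov alternative would require separate estimates on the two--parameter increments, which you have not provided.
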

\begin{proof} In the proof, $c_p$ denotes a constant that can vary from one line to other.
First of all,
\begin{align*}
&\EE(\omega\otimes_S\omega)_{ijk}^{2p}\le c_p(t-s)^{2p}+c_p \EE\bigg(\int_s^t\int_s^\xi e^{-\lambda_i(\xi-r)}d\omega_j(r) d\omega_k(\xi)\bigg)^{2p}.
\end{align*}
To estimate this last expectation, note that
\begin{equation*}
  \EE\bigg(\int_s^\xi e^{-\lambda_i(\xi-r)}d\omega_j(r)\bigg)^{2p} \le c_p \bigg(\EE\bigg(\int_s^\xi e^{-\lambda_i(\xi-r)}d\omega_j(r)\bigg)^2\bigg)^p\le c_p(\xi-s)^{p},
\end{equation*}
due to the fact that the above integral is a Gau\ss--variable. Applying Lemma \ref{stel4} we obtain that $\EE(\omega\otimes_S\omega)_{ijk}^{2p}\le c_p (t-s)^{2p}$. Now, by H{\"o}lder's--inequality we have
\begin{align*}\begin{split}
  &\EE\|(\omega\otimes_S\omega)(s,t)\|_{L_2(L_2(V,V_\kappa),V\otimes V))}^{2p} = \EE\bigg( \sum_{i,j,k}q_jq_k\lambda_i^{-2\kappa}(\omega\otimes_S \omega)_{ijk}^2\bigg)^{p} \\
  & \le \bigg(\sum_{ijk}q_jq_k{ \lambda_i^{\frac{-\nu p}{p-1}}}\bigg)^{p-1}\sum_{ijk}q_jq_k\lambda_i^{{p\nu}-2p\kappa}
  \EE(\omega\otimes_S \omega)_{ijk}^{2p}\\
  &\le \bigg(\sum_{ijk}q_j  q_k{\lambda_i^{\frac{-\nu p}{p-1}}}\bigg)^{p-1}\sum_{ijk}q_jq_k\lambda_i^{{p\nu}-2p\kappa}c_p(t-s)^{2p}.
\end{split}
\end{align*}

Next we would like to sketch that the random field $(\omega\otimes_S\omega)$ has a continuous version in $L_2(L_2(V,V_\kappa),V\otimes V)$. Let us first consider the Ito--version of the
Hilbert--space valued stochastic integral $(\omega\otimes_S\omega)(0,\cdot)$ given by
\begin{equation}\label{steeq101}
  t\mapsto E(\omega\otimes_S\omega)(0,\cdot)+\frac12\sum_i (e_i,Ee_i)(e_i\otimes_V e_i)t.
\end{equation}
To see that such an Ito--Integral makes sense we consider at first a predictable stochastic process  $G:[0,T]\mapsto V$.
We define
\begin{equation*}
 w\mapsto (G(\xi)\otimes)w=G(\xi)\otimes_V w\in V\otimes V,\quad w\in V.
\end{equation*}
If $G\otimes$ satisfies the condition
\begin{align*}
\EE\bigg(&\int_0^t(G(\xi)\otimes)d\omega(\xi)\bigg)^2=
  \int_0^t\sum_iq_i \EE \|(G(\xi)\otimes)e_i\|_{V\otimes V}^2d\xi\\
  &=\EE\int_0^t\sum_{ijk}q_i(e_j\otimes_V e_k,G(\xi)\otimes_Ve_i)_{V\otimes V}^2d\xi\\&=\sum_{ij}\int_0^tq_i\EE(e_j,G(\xi))^2d\xi=\sum_iq_i\int_0^T\EE\|G(\xi)\|^2d\xi<\infty
\end{align*}
then the Ito--integral
\begin{equation*}
  \int_0^t(G(\xi)\otimes)d\omega(\xi)\in V\otimes V
\end{equation*}
is well defined.
We apply this formula to define an $L_2(L_2(V,V_\kappa),V\otimes V)$-valued Ito--integral. In particular we set
\begin{equation*}
  G(\xi)=G_{ij}(\xi)=\int_0^\xi S(\xi-r)E_{ij}d\omega(r)
\end{equation*}
where $(E_{ij})_{i,j\in\NN}$ is the complete orthonormal system of $L_2(V,V_\kappa)$. Then $(\omega\otimes_S\omega)(s,t)$ is well defined on
$L_2(L_2(V,V_\kappa),V\otimes V)$.

\smallskip

Since additivity holds almost surely for the Ito--integrals, the following equality follows
\begin{align}\label{locsteq1}
\begin{split}
\cdot (\omega\otimes_S\omega)(s,t)&=\cdot(\omega\otimes_S\omega)(0,t)-\cdot (\omega\otimes_S\omega)(0,s)\\
&-(-1)^{-\alpha}\int_s^t\int_0^s S(\xi-r)\cdot d\omega(r)\otimes_Vd\omega(\xi)
\end{split}
\end{align}
almost surely for $(s,t)\in \bar \Delta_{0,T}$. Note that $t\mapsto(\omega\otimes_S\omega)(0,\cdot)$ is continuous on a set of full measure.
Furthermore, the continuity of
\begin{equation*}
  \bar \Delta_{0,T}\ni (s,t)\mapsto \int_s^t\int_0^s S(\xi-r)\cdot d\omega(r)\otimes_Vd\omega(\xi)
\end{equation*}
follows by \cite{GLSch-local} by a fractional calculus argument based on fractional integrals.
By \eqref{locsteq1} $(\omega\otimes_S\omega)(s,t)$ is continuous on $\bar\Delta_{0,T}$ on a set of full measure.
Outside this set of full measure we set $(\omega\otimes_S\omega)\equiv 0$.
Hence $(\omega\otimes_S\omega)$ is continuous on $\bar\Delta_{0,T}$.

\end{proof}

\begin{lemma}\label{locall11}
Suppose that the conditions for $\nu,\,p$ of the last lemma hold. Then
\begin{align*}
\EE\|(\omega\otimes_S\omega)(s,t)-(\omega^n\otimes_S\omega^n)(s,t)\|_{L_2(L_2(V,V_\kappa),V\otimes V)}^{2p}& \le c_{p}\delta^\eps(t-s)^{2p-\eps}.
\end{align*}
\end{lemma}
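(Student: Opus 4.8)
The plan is to mimic the structure of the proof of Lemma \ref{locall110}, but feeding in the componentwise estimate of Lemma \ref{stel5} rather than the crude bound on $\EE(\omega\otimes_S\omega)_{ijk}^{2p}$. First I would recall that, by \eqref{localeq24} and the analogous decomposition for $\omega^n$, the difference $(\omega\otimes_S\omega)(s,t)-(\omega^n\otimes_S\omega^n)(s,t)$ has componentwise representation
\begin{equation*}
(E_{ij}((\omega\otimes_S\omega)-(\omega^n\otimes_S\omega^n))(s,t),e_i\otimes_V e_k)_{V\otimes V}=\frac{q_j^\frac12 q_k^\frac12}{\lambda_i^\kappa}\big((\omega\otimes_S\omega)_{ijk}-(\omega^n\otimes_S\omega^n)_{ijk}\big),
\end{equation*}
and vanishes for the $e_l\otimes_V e_k$ components with $l\neq i$. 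Note the Ito--correction term $\tfrac12\delta_{jk}q_j^\frac12 q_k^\frac12\lambda_i^{-\kappa}(t-s)$ appearing on both sides cancels exactly in the difference, so only the genuine double-integral discrepancy $(\omega\otimes_S\omega)_{ijk}-(\omega^n\otimes_S\omega^n)_{ijk}$ survives. Hence
\begin{align*}
\EE\|(\omega\otimes_S\omega)(s,t)-(\omega^n\otimes_S\omega^n)(s,t)\|_{L_2(L_2(V,V_\kappa),V\otimes V)}^{2p}
&=\EE\bigg(\sum_{i,j,k}q_jq_k\lambda_i^{-2\kappa}\big((\omega\otimes_S\omega)_{ijk}-(\omega^n\otimes_S\omega^n)_{ijk}\big)^2\bigg)^{p}.
\end{align*}

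Next I would apply H\"older's inequality exactly as in Lemma \ref{locall110}, splitting the weight $\lambda_i^{-2\kappa}=\lambda_i^{-\nu p/(p-1)}\cdot\lambda_i^{\nu-2\kappa}$ across the two factors, to obtain the bound
\begin{equation*}
\bigg(\sum_{i,j,k}q_jq_k\lambda_i^{-\nu p/(p-1)}\bigg)^{p-1}\sum_{i,j,k}q_jq_k\lambda_i^{p\nu-2p\kappa}\,\EE\big|(\omega\otimes_S\omega)_{ijk}-(\omega^n\otimes_S\omega^n)_{ijk}\big|^{2p}.
\end{equation*}
Now I would invoke Lemma \ref{stel5}, which gives $\EE|(\omega\otimes_S\omega)_{ijk}-(\omega^n\otimes_S\omega^n)_{ijk}|^{2p}\le c_p\delta^\eps\lambda_i(t-s)^{2p-\eps}$. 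Substituting this in, the $i$-sum becomes $\sum_i\lambda_i^{p\nu-2p\kappa+1}$, which converges by the second hypothesis of Lemma \ref{locall110}, while the $j,k$-sums contribute $(\tr_V Q)^2<\infty$ and the first bracket is finite by the first hypothesis; all factors of $\delta^\eps$ and $(t-s)^{2p-\eps}$ come straight out. Collecting the constants into a single $c_p$ yields precisely the claimed estimate.

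The only point requiring a little care — and the place where I would be most careful — is confirming that the exponent of $\lambda_i$ produced after using Lemma \ref{stel5} is exactly $p(\nu-2\kappa)+1$, so that it matches the summability assumption $\sum_i\lambda_i^{p(\nu-2\kappa)+1}<\infty$ imported from Lemma \ref{locall110}: the extra factor $\lambda_i$ from \eqref{localeq27} is what upgrades the $\lambda_i^{p\nu-2p\kappa}$ weight to $\lambda_i^{p\nu-2p\kappa+1}$, and this is the whole reason the hypotheses of the two lemmas are stated with that ``$+1$''. Everything else is a routine rerun of the computation in Lemma \ref{locall110}, so I would not belabor it; one may simply remark that the argument is identical except that the trivial bound on the componentwise moment is replaced by \eqref{localeq27}.
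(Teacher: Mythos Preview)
Your proposal is correct and follows essentially the same route as the paper: write the Hilbert--Schmidt norm as the weighted sum $\sum_{i,j,k}q_jq_k\lambda_i^{-2\kappa}(\cdot)^2$, apply H\"older exactly as in Lemma \ref{locall110}, and then feed in the componentwise estimate \eqref{localeq27} from Lemma \ref{stel5} so that the extra factor $\lambda_i$ turns the weight into $\lambda_i^{p(\nu-2\kappa)+1}$, which is summable by hypothesis. One cosmetic point: your verbal description of the H\"older split ``$\lambda_i^{-2\kappa}=\lambda_i^{-\nu p/(p-1)}\cdot\lambda_i^{\nu-2\kappa}$'' is not literally an identity (the exponents do not add to $-2\kappa$), but the inequality you then write down is the correct one, and your remark about the Ito--correction is harmless since $(\omega\otimes_S\omega)_{ijk}$ is by definition the Stratonovich double integral and Lemma \ref{stel5} already estimates the full difference.
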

\begin{proof}

Having Lemma \ref{stel5} in mind we obtain

\begin{align*}
  &\EE\|(\omega\otimes_S\omega)(s,t)-(\omega^n\otimes_S\omega^n)(s,t)\|_{L_2(L_2(V,V_\kappa),V\otimes V)}^{2p}\\
  &= \EE\bigg( \sum_{i,j,k}q_jq_k\lambda_i^{-2\kappa}((\omega\otimes_S \omega)_{ijk}-(\omega^n\otimes_S \omega^n)_{ijk})^2\bigg)^{p} \\
  & \le \bigg(\sum_{ijk}q_jq_k\lambda_i^\frac{-p\nu}{p-1} \bigg)^{p-1}\sum_{ijk}q_jq_k\lambda_i^{p\nu-2p\kappa}\EE((\omega\otimes_S \omega)_{ijk}-(\omega^n\otimes_S \omega^n)_{ijk})^{2p}\\
  &\le c \bigg(\sum_{ijk}q_j  q_k\lambda_i^\frac{-p\nu}{p-1} \bigg)^{p-1}\sum_{ijk}q_jq_k\lambda_i^{p(\nu-2\kappa)}  \delta^\eps\lambda_i(t-s)^{2p-\eps}.
\end{align*}

\end{proof}

Note that
\begin{equation*}
\int_\tau^t\int_s^\tau S(\xi-r)Ed\omega(r)\otimes_Vd\omega(\xi)=  \int_\tau^tS(\xi-\tau)\int_s^\tau S(\tau-r)Ed\omega(r)\otimes_Vd\omega(\xi),
\end{equation*}
and this is the reason to define the following operators:
\begin{align}\label{Somega}
\begin{split}
&e\in V\mapsto\omega_S(s,\tau)e:=(-1)^{-\alpha}\int_s^\tau(S(\xi-s)e)\otimes_Vd\omega(\xi)\\
&E\in L_2(V,V_\kappa) \mapsto S_{\omega}(\tau,t)E:=  \int_\tau^t S(t-r)Gd\omega(r).
\end{split}
\end{align}
We refer to Garrido-Atienza {\it et al.} \cite{GLSch-local} to check that these operators are well defined and for its additional properties.

\medskip

Now we can formulate the main result of this section:

\begin{lemma}\label{localt2}
Suppose that $\omega$ is a trace-class canonical Brownian--motion. Then Theorem \ref{stet1} holds.
\end{lemma}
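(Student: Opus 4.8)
The plan is to complete the Brownian--motion proof of Theorem~\ref{stet1} by upgrading the pointwise $L_{2p}$--estimates of Lemmas~\ref{locall110} and~\ref{locall11} to almost sure convergence in the H{\"o}lder norm $\|\cdot\|_{2\beta,0,T}$, by a Garsia--Rodemich--Rumsey / Kolmogorov type argument for two--parameter fields, in the spirit of the use of Lemma~3.7 of \cite{DeNeTi10} made in Section~\ref{stes2}. The point of departure is algebraic: splitting the outer integration in \eqref{omegaSS} at an intermediate $\tau\in[s,t]$ and rewriting the inner integral over $[s,\tau]$ by means of the semigroup identity displayed just before \eqref{Somega}, one obtains the Chen--type relation
\[
(\omega\otimes_S\omega)(s,t)=(\omega\otimes_S\omega)(s,\tau)+(\omega\otimes_S\omega)(\tau,t)+c\,\omega_S(\tau,t)\,S_\omega(s,\tau),
\]
with $c$ a fixed normalization constant, and the same identity holds verbatim for the smooth approximations $(\omega^n\otimes_S\omega^n)$ with $\omega_S,S_\omega$ replaced by their $\omega^n$--analogues (all integrals being then genuine Bochner integrals). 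Hence both $(\omega\otimes_S\omega)$ and the difference field $(\omega\otimes_S\omega)-(\omega^n\otimes_S\omega^n)$ are additive up to the ``cross'' remainder $\omega_S(\tau,t)S_\omega(s,\tau)$, whose moments and the $\delta^\eps$--rate of convergence of whose approximations are supplied by \cite{GLSch-local} (these are essentially first--level objects, carrying the same gain in $\delta$ as in Lemma~\ref{stel5}).

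Fix $\kappa$ so that the summability hypotheses of Lemma~\ref{locall110} admit some $\nu<2\kappa$ and some $p$ large enough (depending on $\kappa,\nu$) with $\sum_i\lambda_i^{p(\nu-2\kappa)+1}<\infty$. I would then feed the pointwise bounds
\[
\EE\|(\omega\otimes_S\omega)(s,t)\|^{2p}\le c_p|t-s|^{2p},
\qquad
\EE\|(\omega\otimes_S\omega)(s,t)-(\omega^n\otimes_S\omega^n)(s,t)\|^{2p}\le c_p\,\delta^\eps|t-s|^{2p-\eps}
\]
of Lemmas~\ref{locall110} and~\ref{locall11}, together with the analogous estimates for the cross term, into the two--parameter GRR inequality. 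Since the first exponent is $2p$ and the second is $2p-\eps$ with $\eps>0$ small enough, while $H=1/2$ forces $2\beta<1$, the controlling double integral $\iint_{\bar\Delta_{0,T}}|t-s|^{2p-\eps-4\beta p-2}\,ds\,dt$ is finite; one concludes, for a possibly smaller $\eps'>0$,
\[
\EE\|(\omega\otimes_S\omega)\|_{2\beta,0,T}^{2p}<\infty,
\qquad
\EE\|(\omega\otimes_S\omega)-(\omega^n\otimes_S\omega^n)\|_{2\beta,0,T}^{2p}\le c_p\,\delta^{\eps'}.
\]

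Because $\delta=2^{-n}T$, the series $\sum_n\EE\|(\omega\otimes_S\omega)-(\omega^n\otimes_S\omega^n)\|_{2\beta,0,T}^{2p}\le c_p\sum_n(2^{-n}T)^{\eps'}$ converges, so by Markov's inequality and the Borel--Cantelli lemma $\|(\omega\otimes_S\omega)-(\omega^n\otimes_S\omega^n)\|_{2\beta,0,T}\to0$ on a set of full measure; outside that set we set $(\omega\otimes_S\omega)\equiv0$, so that $(\omega\otimes_S\omega)$ is the continuous field already produced in Lemma~\ref{locall110}, and in particular is continuous on $\bar\Delta_{0,T}$. Exhausting $(1/3,H)$ by a sequence $\beta_m\uparrow H$ and intersecting the corresponding full--measure sets yields convergence for every $\beta<H$ on one common set. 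The companion statement $\omega^n\to\omega$ in $C_\beta([0,T];V)$ on a set of full measure follows in the same, simpler, way from the classical Kolmogorov criterion applied to the $V$--valued Gaussian process $\omega$ (using $\sum_iq_i<\infty$; see \cite{Kunita90}); a final intersection gives the joint convergence asserted in Theorem~\ref{stet1}.

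The main obstacle is the two--parameter continuity/comparison estimate for the operator--valued, $S$--twisted field: one must check that the semigroup occurring inside \eqref{omegaSS} does not spoil the GRR/Kolmogorov machinery, which amounts to controlling the cross remainder $\omega_S(\tau,t)S_\omega(s,\tau)$ and its $\omega^n$--approximations in $C_{2\beta}$ --- a point we take from \cite{GLSch-local}, and whose Brownian analysis runs parallel to \cite{FriHai14}. A secondary, though not wholly routine, matter is to verify that the set of admissible triples $(\kappa,\nu,p)$ is non-empty; this is where the standing hypotheses (namely $\hat V=V_\kappa$ with $\kappa$ large enough and $(-A)^{-1}$ compact with $\lambda_i\to\infty$) enter, via the fact that $\sum_i\lambda_i^{-a}<\infty$ for all sufficiently large $a$.
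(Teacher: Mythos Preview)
Your proposal is essentially the paper's own argument: feed the pointwise $L_{2p}$ bounds of Lemmas~\ref{locall110} and~\ref{locall11} into the two--parameter Garsia--Rodemich--Rumsey inequality (Lemma~3.4 of \cite{DeNeTi10}), handle the Chen cross remainder $\omega_S(\cdot,\cdot)S_\omega(\cdot,\cdot)$ via \cite{GLSch-local}, and conclude almost--sure convergence in $C_{2\beta}$ by Chebyshev and Borel--Cantelli. The only organizational difference is that the paper does not claim the combined bound $\EE\|(\omega\otimes_S\omega)-(\omega^n\otimes_S\omega^n)\|_{2\beta}^{2p}\le c_p\delta^{\eps'}$ directly; instead it applies Chebyshev/Borel--Cantelli only to the GRR double integral (whose expectation is $\le c\delta^\eps$) and then combines this with the almost--sure convergence \eqref{Somegac} of the cross--term supremum taken from \cite{GLSch-local}, thereby avoiding any $L_{2p}$ estimate on that supremum --- a slightly safer route than the one you sketch, but the substance is the same.
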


\begin{proof}
First we  show that  $(\omega\otimes_S\omega)$ exists in the sense of Theorem \ref{stet1}. We apply the Garsia--Rodemich--Rumsey lemma for Deya {\it et al.} \cite{DeNeTi10}, Lemma 3.4., which is possible since $(\omega \otimes_S \omega)$ is continuous as we have already explained.
By Lemma \ref{locall110} we obtain that
\begin{equation*}
  \EE \int_0^T\int_0^t \frac{\|(\omega\otimes_S\omega)(s,t)\|_{L_2(L_2(V,V_\kappa),V\otimes V)}^{2p}}{|t-s|^{4\beta p+2}}dsdt\le
  c_p\int_0^T\int_0^t \frac{|t-s|^{2p}}{|t-s|^{4\beta p+2}}dsdt <\infty
\end{equation*}
when $p$ is sufficiently large since $\beta<1/2$.  This shows that there exists a set  $\Omega^\prime\subset \Omega$ of $\PP$--measure one on which $(\omega\otimes_S\omega)\in C_{2\beta} (\bar \Delta_{0,T}; L_2(L_2(V,V_\kappa),V\otimes V))$.

\smallskip

 To see the convergence property of the theorem we note that from Lemma \ref{locall11}
 \begin{align*}
   \EE&\int_0^T\int_0^t \frac{\|(\omega\otimes_S\omega)(s,t)-(\omega^n\otimes_S\omega^n)(s,t)\|_{L_2(L_2(V,V_\kappa),V\otimes V))}^{2p}}{|t-s|^{4\beta p+2}}dsdt\\
   &\le c_{p} \delta^\eps \int_0^T\int_0^t \frac{|t-s|^{2p-\eps}}{|t-s|^{4\beta p+2}}dsdt
   \le \delta^\eps c
 \end{align*}
for $p$ sufficiently large such that $p(2-4\beta)>1+\eps$. Recall that $\delta=\delta(n)=2^{-n}T$ and take a sequence $(o(n))_{n\in\NN}$ tending to zero for $n\to\infty$ such that $\sum_n\delta(n)^\eps/o(n)<\infty$. Then by the Chebyshev-lemma
 \begin{align*}
 \sum_n&\PP\bigg(\int_0^T\int_0^t \frac{\|(\omega\otimes_S\omega)(s,t)-(\omega^n\otimes_S\omega^n)(s,t)\|_{L_2(L_2(V,V_\kappa),V\otimes V)}^{2p}}{|t-s|^{4\beta p+2}}dsdt>o(n)\bigg)\\
 &\le
 \sum_n \frac{1}{o(n)}\EE\int_0^T\int_0^t \frac{\|(\omega\otimes_S\omega)(s,t)-(\omega^n\otimes_S\omega^n)(s,t)\|_{L_2(L_2(V,V_\kappa),V\otimes V)}^{2p}}{|t-s|^{4\beta p+2}}dsdt\\
 &\le c\sum_n\frac{\delta(n)^\eps}{o(n)}<\infty.
 \end{align*}
 Hence by the Borel--Cantelli--lemma we obtain the  convergence of the integrals inside the above probability to zero for $n\to\infty$ with probability 1.
In addition, we have
 \begin{equation}\label{Somegac}
   \lim_{n\to\infty}\sup_{0<s<r<t<T}\frac{\|\omega_S(r,t){S_{(\omega^n-\omega)}}(s,r)-{(\omega^n-\omega)_S}(r,t)S_{\omega^n}(s,r)\|_{L_2(L_2(V,V_\kappa),V\otimes V)}}{|r-s|^{\beta}|t-r|^{\beta}}=0,
 \end{equation}
see \cite{GLSch-local}. Now by Lemma 3.4 of \cite{DeNeTi10} we get
\begin{align*}
  \|(\omega & \otimes_S\omega)-(\omega^n\otimes_S\omega^n)\|_{C_{2\beta}(\bar \Delta_{0,T},L_2(L_2(V,V_\kappa),V\otimes V))}\\
  &\le c\bigg(\int_0^T\int_0^t \frac{\|(\omega\otimes_S\omega)(s,t)-(\omega^n\otimes_S\omega^n)(s,t)\|_{L_2(L_2(V,V_\kappa),V\otimes V)}^{2p}}{|t-s|^{4\beta p+2}}dsdt\bigg)^{1/2p}\\
  &+c\sup_{0<s<r<t<T}\frac{\|\omega_S(r,t){S_{(\omega^n-\omega)}(s,r)-(\omega^n-\omega)_S}(r,t)S_{\omega^n}(s,r)\|_{L_2(L_2(V,V_\kappa),V\otimes V)}}{|r-s|^{\beta}|t-r|^{\beta}}.
\end{align*}
Hence we have the convergence conclusion of Theorem \ref{stet1}  on a set $\Omega_0$ of full $\PP$--measure.

\end{proof}

\begin{remark}\label{localr1}
If $-A$ is given by the Laplace--operator with homogenous Dirichlet-- or Neumann--boundary conditions on a bounded smooth domain in $\RR^d$ such that $\lambda_i\sim i^{2/d}$ and $d=1$, the conditions of Lemma \ref{locall11} are fulfilled when choosing $\nu=1/2$, $\kappa>1/4$ and $p$ sufficiently large.
\end{remark}

\section{Additional properties of $(\omega\otimes_S\omega)$}
In this section we study additional properties of $(\omega\otimes_S\omega)$, which are valid for the two considered constructions.

Let us denote by $\Delta$ the set of pairs $(s,t)\in\RR^2$ such that $s\le t$. We consider the set $C(\Delta,L_2(L_2(V,V_\kappa),V\otimes V))$
equipped with a countable family of semi--norms for $m\in\NN$
\begin{equation*}
  \sup_{(s,t)\in \bar\Delta_{-m,m}}\|F(s,t)\|_{L_2(L_2(V,V_\kappa),V\otimes V)},
\end{equation*}
for $F\in C(\Delta,L_2(L_2(V,V_\kappa),V\otimes V))$. This gives us a Polish--space.
\smallskip

In the previous sections we found that we have $(\omega\otimes_S\omega)\in C(\bar\Delta_{0,T},L_2(L_2(V,V_\kappa),V\otimes V))$ on a set of probability one. Since our noise
is defined on $\RR$ we obtain that for any $m\in\NN$
\begin{equation*}
  (\omega\otimes_S\omega)\in C(\bar\Delta_{-m,m},L_2(L_2(V,V_\kappa),V\otimes V))
\end{equation*}
and hence that
\begin{equation*}
 (\omega\otimes_S\omega)\in C(\Delta,L_2(L_2(V,V_\kappa),V\otimes V))
\end{equation*}
on a set of probability one. On this set we have for every $m\in\NN$ that
$(\omega\otimes_S\omega)$ is $2\beta$ H{\"o}lder--continuous in the sense of \eqref{steeq102}  on $\bar\Delta_{-m,m}$. In addition, for every $m\in\NN$
\begin{equation*}
 \lim_{n\to\infty} \|(\omega\otimes_S\omega)-(\omega^n\otimes_S\omega^n)\|_{2\beta,-m,m}=0.
\end{equation*}
Outside this set let us define $(\omega^n\otimes_S\omega^n)\equiv 0$ such that those relations hold for all $\omega$.

 \begin{theorem}\label{stet2}
There exists a set of full measure where we have the Chen--equality: On a set of probability one for $-\infty <s\le\tau\le t<\infty$
 \begin{equation*}
  (\omega\otimes_S\omega)(s,t)-(\omega\otimes_S\omega)(\tau,t)-(\omega\otimes_S\omega)(s,\tau)=(-1)^{-\alpha} \omega_S(r,t)S_\omega(s,r).
\end{equation*}
 \end{theorem}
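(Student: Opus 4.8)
The plan is to establish the Chen identity first at the level of the piecewise--linear approximations $\omega^n$, where it is completely elementary, and then to pass to the limit $n\to\infty$ using the convergence furnished by Theorem \ref{stet1}. (Throughout I write $\tau$ for the intermediate point, denoted $r$ in the statement.) \emph{Step 1: the smooth level.} Fix $-\infty<s\le\tau\le t<\infty$ and $E\in L_2(V,V_\kappa)$. Since $\omega^n$ is piecewise linear, $E(\omega^n\otimes_S\omega^n)(s,t)=\int_s^t\int_s^\xi S(\xi-r)E\,d\omega^n(r)\otimes_V d\omega^n(\xi)$ is an ordinary Lebesgue integral over the simplex $\{s\le r\le\xi\le t\}$. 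I would split this simplex into the two sub--simplices $\{s\le r\le\xi\le\tau\}$ and $\{\tau\le r\le\xi\le t\}$, which reproduce $E(\omega^n\otimes_S\omega^n)(s,\tau)$ and $E(\omega^n\otimes_S\omega^n)(\tau,t)$ respectively, together with the rectangle $\{s\le r\le\tau\le\xi\le t\}$, on which the semigroup law $S(\xi-r)=S(\xi-\tau)S(\tau-r)$ gives
\begin{align*}
\int_\tau^t\int_s^\tau S(\xi-r)E\,d\omega^n(r)\otimes_V d\omega^n(\xi)&=\int_\tau^t S(\xi-\tau)\bigg(\int_s^\tau S(\tau-r)E\,d\omega^n(r)\bigg)\otimes_V d\omega^n(\xi).
\end{align*}
By the definitions in \eqref{Somega} (this is precisely the factorisation displayed just before \eqref{Somega}) the right--hand side equals $(-1)^{-\alpha}E\,\omega^n_S(\tau,t)S_{\omega^n}(s,\tau)$, the sign factor being the usual bookkeeping constant of the fractional integration by parts. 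As $E$ is arbitrary, the Chen identity holds for every $n\in\NN$, every $\omega$ and all $s\le\tau\le t$.

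\emph{Step 2: passage to the limit.} By Theorem \ref{stet1}, together with its extension to every $\bar\Delta_{-m,m}$ recorded at the beginning of this section, there is a set $\Omega_0$ of full $\PP$--measure on which $(\omega^n\otimes_S\omega^n)\to(\omega\otimes_S\omega)$ in $C(\Delta,L_2(L_2(V,V_\kappa),V\otimes V))$, that is, uniformly on each $\bar\Delta_{-m,m}$. On $\Omega_0$ the three terms on the left of the smooth identity therefore converge, uniformly over $\{-m\le s\le\tau\le t\le m\}$, to $(\omega\otimes_S\omega)(s,t)$, $(\omega\otimes_S\omega)(\tau,t)$ and $(\omega\otimes_S\omega)(s,\tau)$. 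For the right--hand side I would use the linearity of $\omega\mapsto\omega_S$ and of $\omega\mapsto S_\omega$ to write
\begin{align*}
\omega^n_S(\tau,t)S_{\omega^n}(s,\tau)-\omega_S(\tau,t)S_\omega(s,\tau)=(\omega^n-\omega)_S(\tau,t)S_{\omega^n}(s,\tau)+\omega_S(\tau,t)S_{(\omega^n-\omega)}(s,\tau),
\end{align*}
and then invoke \eqref{Somegac} and the estimates of \cite{GLSch-local} (in the fractional--Brownian setting the analogous bound follows from the $\beta$--H\"older estimates \eqref{eq20} on $\omega^n-\omega$ used in the proof of Lemma \ref{stel2}) to conclude that both summands tend to zero uniformly on $\bar\Delta_{-m,m}$. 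Passing to the limit and taking the union over $m\in\NN$ yields the asserted identity on $\Omega_0$ for all $-\infty<s\le\tau\le t<\infty$; outside $\Omega_0$ we have set $(\omega\otimes_S\omega)\equiv0$, so it holds trivially there as well.

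The combinatorics of Step 1 are routine; the one genuine point is the convergence of the bilinear ``remainder'' $\omega^n_S(\tau,t)S_{\omega^n}(s,\tau)$, which is a composition of two objects each converging only in a H\"older seminorm, so that the product estimate \eqref{Somegac} (and its companions in \cite{GLSch-local}) is really needed. It is worth stressing that no case distinction between the two constructions of $(\omega\otimes_S\omega)$ is necessary: Step 1 is literally the same for the fractional--Brownian and the Brownian approximations, and in both cases Theorem \ref{stet1} supplies exactly the uniform--on--compacts convergence used in Step 2, so the argument applies verbatim to both.
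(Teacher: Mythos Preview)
Your proof is correct and follows the same approach as the paper: verify the Chen identity for the smooth approximations $(\omega^n\otimes_S\omega^n)$ using the semigroup law and additivity of the integral, and then pass to the limit via Theorem~\ref{stet1}. The paper's own argument is just a two--line sketch of exactly this strategy; your Step~1 spells out the simplex decomposition and your Step~2 makes explicit the convergence of the bilinear remainder $\omega^n_S(\tau,t)S_{\omega^n}(s,\tau)$, which the paper leaves implicit.
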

 It is a simple exercise to check this Chen--equality for $(\omega^n\otimes_S\omega^n)$ which follows easily by the semigroup properties and by the additivity of the integral with respect to the integral bounds. To see this inequality for $(\omega\otimes_S\omega)$ we can apply the convergence conclusion of
 Theorem \ref{stet1}.

\medskip

Finally, we show that $(\omega\otimes_S\omega)$ is strongly stationary. To do this we recall the definition of strong stationarity of the metric dynamical system $(\Omega,\fF,\PP,\theta)$.

\begin{definition}
The stochastic process $X=(X_t)_{t\in\RR}$ is called strongly stationary if there exists a $\theta$--invariant set $\tilde \Omega\in\fF$ of full measure such that for any $\tau\in\RR$ and $\omega\in \tilde \Omega$ we have that
\begin{equation}\label{steeq1}
  X_0(\theta_\tau\omega)=X_\tau(\omega).
\end{equation}
Such  a process is called {\em weakly stationary} if for any $\tau\in \RR$ there exists a set $\Omega_{\tau}\in\FF$ of full $\PP$--measure
such that for $\omega\in \Omega_{\tau}$ we have \eqref{steeq1}.
\end{definition}

To establish the last result of this paper we need to use the following theorem, see Lederer \cite{Led01}. Similar results can be found in Arnold and Scheutzow \cite{ArnSche98}, Imkeller and Lederer \cite{ImkLed02}, and K{\"u}mmel \cite{Kum14}.

\begin{theorem}\label{led}
Let $S$ be a Polish--space and let $X=(X_t)_{t\in\RR}$ be a stochastic process over the metric dynamical system $(\Omega,\fF,\PP,\theta)$. Suppose that $X$ is weakly stationary.
Then there exists a continuous stochastic process $\hat X=(\hat X)_{t\in\RR}$ such that $X$ and $\hat X$ are indistinguishable, and $\hat X$ is strongly stationary.
\end{theorem}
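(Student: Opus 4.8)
The plan is to realise $\hat X$ explicitly through the flow and then solve a \emph{perfection} problem; throughout, $X$ is taken to have $\PP$--almost surely continuous paths, which is the natural standing regularity (and is in any case forced by the requirement that $\hat X$ be continuous and indistinguishable from $X$). The natural candidate is
\[
 \hat X_t(\omega):=X_0(\theta_t\omega),\qquad t\in\RR,\ \omega\in\Omega,
\]
which is $(\bB(\RR)\otimes\fF,\bB(S))$--measurable, being the composition of the measurable flow $(t,\omega)\mapsto\theta_t\omega$ with the measurable map $X_0$. The decisive feature of this choice is that $\hat X$ is a \emph{perfect helix}: from $\theta_0=\mathrm{id}_\Omega$ and $\theta_{t+s}=\theta_t\theta_s$ one obtains, for \emph{all} $\omega$ and all $s,t$,
\[
 \hat X_t(\theta_s\omega)=X_0(\theta_{t+s}\omega)=\hat X_{t+s}(\omega),\qquad\text{in particular}\qquad \hat X_0(\theta_\tau\omega)=\hat X_\tau(\omega).
\]
Thus strong stationarity of $\hat X$ in the sense of \eqref{steeq1} holds automatically on all of $\Omega$, and the entire content of the theorem is shifted to producing a \emph{continuous} version of $\hat X$ that is indistinguishable from $X$.

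First I would reduce everything to one statement. Weak stationarity says that for each fixed $t$ one has $X_t=\hat X_t$ almost surely. I claim it suffices to exhibit a $\theta$--invariant set $\tilde\Omega\in\fF$ of full $\PP$--measure on which $X_t(\omega)=\hat X_t(\omega)$ for \emph{all} $t\in\RR$ simultaneously. Indeed, given such a $\tilde\Omega$, let the final process equal $X$ on $\tilde\Omega$ and a fixed constant on $\Omega\setminus\tilde\Omega$. It is continuous, since on $\tilde\Omega$ it coincides with the path--continuous $X$; it is indistinguishable from $X$, the two differing only on the null set $\Omega\setminus\tilde\Omega$; and it is strongly stationary, because for $\omega\in\tilde\Omega$ invariance gives $\theta_\tau\omega\in\tilde\Omega$, whence its value at $0$ on $\theta_\tau\omega$ equals $X_0(\theta_\tau\omega)=\hat X_\tau(\omega)=X_\tau(\omega)$, i.e. its value at $\tau$ on $\omega$.

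It then remains to carry out the \emph{perfection}, upgrading the family of $t$--by--$t$ almost sure identities into a single identity valid for all $t$ on one invariant full--measure set. Two ingredients combine. First, the a.s. continuity of $X$ replaces $\RR$ by a countable dense set: intersecting the full--measure sets $\{X_q=\hat X_q\}$ over $q\in\QQ$ with the continuity set yields a full--measure $\Omega^*$ on which $X$ is continuous and $X_q=\hat X_q$ for every rational $q$. Second, to pass both to all real times and to an invariant set I would use a Fubini argument in the shift variable together with the helix relation: since $\PP$ is $\theta$--invariant, $\PP(\theta_s\omega\in\Omega^*)=1$ for each $s$, so the set $\{\omega:\theta_s\omega\in\Omega^*\text{ for Lebesgue--almost every }s\}$ has full measure, and one seeks a $\theta$--invariant co--null refinement $\tilde\Omega$ on which the transport relation $\hat X_t(\theta_s\omega)=\hat X_{t+s}(\omega)$ together with the continuity of $X$ pins down $X_t(\omega)=\hat X_t(\omega)$ for every real $t$.

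The main obstacle is precisely this last passage. The weak hypothesis supplies the stationarity identity only at each \emph{prescribed} time, on an exceptional set depending on the time, and the naive reduction to rational times does not reach the irrational ones: shifting the argument by a real $\tau$ carries rational times to irrational ones, so the identity at irrational times cannot be recovered from the rational ones by the flow alone. Overcoming this genuinely requires combining the path--continuity of $X$ with the measure--preservation and measurable--flow property of $\theta$ to manufacture a single $\theta$--invariant, full--measure set carrying the identity for all real times at once — the continuity being the lever that makes the relevant set of times countable up to null sets, and the invariance and measure--preservation of $\theta$ making the invariant hull full--measure. This is the standard but delicate perfection mechanism, and here I would follow the constructions of Lederer \cite{Led01}, Arnold--Scheutzow \cite{ArnSche98} and Imkeller--Lederer \cite{ImkLed02}, adapted to the present Polish space $L_2(L_2(V,V_\kappa),V\otimes V)$. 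Once $\tilde\Omega$ is in hand, the three required properties follow from the reduction above, completing the proof.
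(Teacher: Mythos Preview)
The paper does not prove this theorem at all: it is quoted as a known result from Lederer \cite{Led01} (with parallel citations to Arnold--Scheutzow \cite{ArnSche98}, Imkeller--Lederer \cite{ImkLed02}, and K\"ummel \cite{Kum14}) and then immediately applied to Theorem~\ref{stet3}. There is therefore no ``paper's own proof'' to compare against.

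Your proposal is a correct outline of the standard perfection argument --- define the candidate $\hat X_t:=X_0\circ\theta_t$, observe it is a perfect helix by the flow property, and then upgrade the $t$--by--$t$ almost sure identity $X_t=\hat X_t$ to a single $\theta$--invariant full--measure set using path continuity together with a Fubini/invariant--hull construction. At the delicate step you explicitly defer to Lederer, Arnold--Scheutzow and Imkeller--Lederer, which are precisely the sources the paper cites for the result. So your approach and the paper's are aligned by construction: both point to the same literature for the actual mechanism.

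One minor remark: in your last paragraph you specialise to the Polish space $L_2(L_2(V,V_\kappa),V\otimes V)$, but Theorem~\ref{led} is stated for an abstract Polish space $S$; the specialisation belongs to the application in Theorem~\ref{stet3}, not to the proof of Theorem~\ref{led} itself.
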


Now we apply this theorem.

\begin{theorem}\label{stet3}
The random fields $(\omega\otimes_S\omega)$ defined in Section \ref{stes2} and Section \ref{stes3} have a strongly stationary version
satisfying the convergence property of Theorem \ref{stet1} with respect to $\bar\Delta_{-m,m}$.
\end{theorem}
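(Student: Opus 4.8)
The plan is to combine the stationarity-upgrade machinery of Theorem \ref{led} with the convergence conclusion of Theorem \ref{stet1}. First I would verify that the relevant state space is Polish: the space $C(\Delta, L_2(L_2(V,V_\kappa), V\otimes V))$ equipped with the countable family of sup-seminorms over $\bar\Delta_{-m,m}$, $m\in\NN$, described just above Theorem \ref{stet2}, is a Polish space, so Theorem \ref{led} applies once we know $(\omega\otimes_S\omega)$ is a well-defined process with values in it (which we established in Sections \ref{stes2} and \ref{stes3}, together with the continuity statements). Next I would establish weak stationarity, i.e.\ that for each fixed $\tau\in\RR$ there is a set $\Omega_\tau$ of full $\PP$--measure on which
\begin{equation*}
  (\omega\otimes_S\omega)(0,\cdot)(\theta_\tau\omega) = (\omega\otimes_S\omega)(\tau,\tau+\cdot)(\omega),
\end{equation*}
or more precisely the shifted identity $(\theta_\tau\omega\otimes_S\theta_\tau\omega)(s,t) = (\omega\otimes_S\omega)(s+\tau, t+\tau)(\omega)$ for $(s,t)\in\Delta$. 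The key point is that this identity is trivially true for the smooth approximations $\omega^n$: the integrals defining $(\omega^n\otimes_S\omega^n)$ in \eqref{omegaSS} depend only on the increments of $\omega^n$, and by \eqref{shift} the increments of $\theta_\tau\omega$ over $[s,t]$ coincide with the increments of $\omega$ over $[s+\tau,t+\tau]$; the semigroup $S$ appearing in the kernel is time-homogeneous, so the change of variables $r\mapsto r+\tau$, $\xi\mapsto\xi+\tau$ in the double integral gives the identity exactly, for every $\omega$. Then I would pass to the limit $n\to\infty$: by Theorem \ref{stet1} applied on each $\bar\Delta_{-m,m}$ (using the extension to $\Delta$ recorded before Theorem \ref{stet2}) both sides converge, for $\PP$--a.e.\ $\omega$, in the appropriate $C_{2\beta}$-norm — the left side because $\theta_\tau$ is $\PP$--preserving so the exceptional null set for $\theta_\tau\omega$ pulls back to a null set, the right side directly — and the a.s.\ limit of an a.s.\ identity is an a.s.\ identity. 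This yields a set $\Omega_\tau$ of full measure (depending on $\tau$) on which \eqref{steeq1} holds, which is exactly weak stationarity.

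Having verified weak stationarity, I would invoke Theorem \ref{led} (Lederer) to obtain a continuous, strongly stationary version $\widehat{(\omega\otimes_S\omega)}$ that is indistinguishable from $(\omega\otimes_S\omega)$; so there is a $\theta$--invariant set $\tilde\Omega$ of full measure on which the crisp cocycle-type identity $\widehat{(\omega\otimes_S\omega)}_0(\theta_\tau\omega)=\widehat{(\omega\otimes_S\omega)}_\tau(\omega)$ holds for all $\tau\in\RR$ simultaneously. Finally I would transfer the convergence property of Theorem \ref{stet1} to this version: since the two versions agree outside a null set, and outside that null set we redefine $(\omega^n\otimes_S\omega^n)\equiv 0$ (exactly as done in the paragraph preceding Theorem \ref{stet2}), the convergence
\begin{equation*}
  \lim_{n\to\infty}\|\widehat{(\omega\otimes_S\omega)} - (\omega^n\otimes_S\omega^n)\|_{2\beta,-m,m} = 0
\end{equation*}
holds on a set of full measure for every $m\in\NN$, which is the asserted convergence with respect to $\bar\Delta_{-m,m}$.

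The main obstacle I anticipate is the bookkeeping around the exceptional sets: Theorem \ref{stet1} produces, a priori, one full-measure set per compact time window $\bar\Delta_{-m,m}$ and one per $\tau$, and to feed the hypothesis of Theorem \ref{led} cleanly one must organize these so that weak stationarity is stated in the exact form the theorem requires, without accidentally needing a $\theta$--invariant set too early (that is the job of Theorem \ref{led}, not ours). Concretely one intersects over $m\in\NN$ to get a single full-measure set carrying the $C(\Delta,\cdot)$-valued continuous process, and then for each $\tau$ separately intersects with the pullback under $\theta_\tau$ and with the $n\to\infty$ a.s.\ convergence set; the measure-preservation of $\theta_\tau$ is what makes all of this stay inside full-measure sets. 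The smooth-approximation identity itself and the semigroup change of variables are routine, and the continuity needed to justify taking limits is already in hand from Corollary \ref{stec1} and Lemma \ref{locall110}.
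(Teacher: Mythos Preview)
Your overall architecture matches the paper's: establish weak stationarity of $\tau\mapsto X_\tau(\omega):=(\omega\otimes_S\omega)(\cdot+\tau,\cdot+\tau)$, invoke Theorem~\ref{led} on the Polish space $C(\Delta, L_2(L_2(V,V_\kappa),V\otimes V))$ to upgrade to strong stationarity, and then transfer regularity, the Chen relation and the convergence statements via indistinguishability. One small point you pass over: Theorem~\ref{led} needs $\tau\mapsto X_\tau$ to be continuous as a process with values in $C(\Delta,\cdot)$; the paper obtains this from the uniform continuity of $(\omega\otimes_S\omega)$ on each compact $\bar\Delta_{-m,m}$, and you should say so explicitly.

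Where you diverge from the paper is in how weak stationarity is verified. The paper works directly with the limiting integral representations: for Section~\ref{stes2} it checks the shift identity on the two summands of \eqref{lastf} separately (citing \cite{DeNeTi10} for the $(\omega\otimes\omega)$ part and observing that the fractional derivative $D^{1-\alpha}_{\cdot-}$ commutes with $\theta_\tau$ for the second summand); for Section~\ref{stes3} the shift identity is an almost-sure property of the Ito/Stratonovich integrals in \eqref{localeq24}. You instead try to establish it at the level of the piecewise-linear approximations and pass to the limit.

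That route has a genuine gap. The deterministic change-of-variables identity you correctly obtain is
\[
\big(\theta_\tau(\omega^n)\otimes_S\theta_\tau(\omega^n)\big)(s,t)=(\omega^n\otimes_S\omega^n)(s+\tau,t+\tau),
\]
where $\theta_\tau(\omega^n)$ is the shift of the already-interpolated path. To identify the limit of the left side with $(\theta_\tau\omega\otimes_S\theta_\tau\omega)(s,t)$ you appeal to Theorem~\ref{stet1} for the path $\theta_\tau\omega$, using that $\theta_\tau$ preserves $\PP$. But Theorem~\ref{stet1} gives convergence of $\big((\theta_\tau\omega)^n\otimes_S(\theta_\tau\omega)^n\big)$, where $(\theta_\tau\omega)^n$ is the piecewise-linear interpolation of $\theta_\tau\omega$ on the \emph{standard} grid $\{j\,2^{-n}T\}$. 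Unless $\tau$ is a dyadic multiple of $T$, $(\theta_\tau\omega)^n\neq\theta_\tau(\omega^n)$: the latter is piecewise linear on the shifted grid $\{j\,2^{-n}T-\tau\}$. So the object for which you have the finite-$n$ identity and the object to which Theorem~\ref{stet1} applies are different, and the argument as written is not closed. You can repair this either by observing that the proofs of Sections~\ref{stes2}--\ref{stes3} are insensitive to the origin of the equidistant grid (true, but it must be argued), or---more directly, and as the paper does---by checking the shift identity on the limiting integrals \eqref{lastf} and \eqref{localeq24} themselves, which avoids the approximation layer altogether.
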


\begin{proof}
We know that $(\omega\otimes_S\omega)$ is continuous.
Since $\bar\Delta_{-m,m}$ is compact $(\omega\otimes_S\omega)$ restricted to this set is uniformly continuous and hence $\tau\mapsto(\omega\otimes_S\omega)(\cdot+\tau,\cdot+\tau)\in C(\bar\Delta_{-m,m},L_2(L_2(V,V_\kappa),V\otimes V))$.

In addition, shifting the integrals of the random field $(\omega\otimes_S\omega)$ we easily obtain that $\tau\mapsto(\omega\otimes_S\omega)(\cdot,\cdot)$ is weakly stationary: For every $\tau\in \RR$ we have that almost surely
\begin{equation*}
  (\omega\otimes_S\omega)(\cdot+\tau,\cdot+\tau)=(\theta_\tau\omega\otimes_S\theta_\tau\omega)(\cdot,\cdot).
\end{equation*}
In particular, the weak stationarity of $(\omega\otimes\omega)$ of Section 2 follows by Deya {\it et al.} \cite{DeNeTi10} Section 2, where this area is presented by a double integral of smooth processes, while for the second expression of \eqref{lastf} we use that $D^{1-\alpha}_{\cdot-}$ and $\theta_t$ commute in some sense.
Therefore, taking $S:=C(\Delta,L_2(L_2(V,V_\kappa),V\otimes V))$ we can apply Theorem \ref{led} to $\tau\mapsto X_\tau(\omega):=(\omega\otimes_S\omega)(\cdot+\tau,\cdot+\tau)\in S$, obtaining a version which is strongly stationary.
However this version is in addition indistinguishable such that we obtain the regularity,  the convergence properties and the Chen--equality for this new version of $(\omega\otimes_S\omega)$.

\end{proof}


\end{document}